\theoremstyle{plain}
\newtheorem{thm}{Theorem}[section]
\newtheorem{cor}[thm]{Corollary}
\newtheorem{lem}[thm]{Lemma}
\theoremstyle{definition}
\theoremstyle{remark}
\newtheorem{rem}[thm]{Remark}
\numberwithin{equation}{section}
\newcommand{\average}{{\mathchoice {\kern1ex\vcenter{\hrule height.4pt
width 6pt depth0pt} \kern-9.7pt} {\kern1ex\vcenter{\hrule
height.4pt width 4.3pt depth0pt} \kern-7pt} {} {} }}
\def\iden{\text{I}}
\def\R{\mathbb{R}}
\def\div{\text{div}}
\begin{document}

\title{Sharp isoperimetric inequalities via the ABP method}

\author{Xavier Cabr\'e}

\address{ICREA and Universitat Polit\`ecnica de Catalunya, Departament de Matem\`{a}tica  Aplicada I, Diagonal 647, 08028 Barcelona, Spain}
\email{xavier.cabre@upc.edu}

\author{Xavier Ros-Oton}

\address{Universitat Polit\`ecnica de Catalunya, Departament de Matem\`{a}tica  Aplicada I, Diagonal 647, 08028 Barcelona, Spain}
\email{xavier.ros.oton@upc.edu}

\author{Joaquim Serra}

\address{Universitat Polit\`ecnica de Catalunya, Departament de Matem\`{a}tica  Aplicada I, Diagonal 647, 08028 Barcelona, Spain}

\email{joaquim.serra@upc.edu}

\thanks{The authors were supported by grants MINECO MTM2011-27739-C04-01 and GENCAT 2009SGR-345}

\keywords{Isoperimetric inequalities, densities, convex cones, homogeneous weights, Wulff shapes, ABP method.}

\subjclass[2010]{Primary 28A75; Secondary 35A23, 49Q20}

\begin{abstract}
We prove some old and new isoperimetric inequalities with the best constant using the ABP method
applied to an appropriate linear Neumann problem.
More precisely, we obtain a new family of sharp isoperimetric inequalities
with weights (also called densities) in open convex cones of $\R^n$.
Our result applies to all nonnegative homogeneous weights satisfying a concavity
condition in the cone. Remarkably, Euclidean balls centered at the origin
(intersected with the cone) minimize the weighted isoperimetric quotient,
even if all our weights are nonradial ---except for the constant ones.

We also study the anisotropic isoperimetric problem in convex cones for the same class of
weights. We prove that the Wulff shape (intersected with the cone)
minimizes the anisotropic weighted perimeter under the weighted volume constraint.

As a particular case of our results, we give new proofs of two classical results:
the Wulff inequality and the isoperimetric inequality in convex cones of Lions and Pacella.
\end{abstract}

\maketitle

\section{Introduction and results}


In this paper we study isoperimetric problems with weights ---also called densities.
Given a weight $w$ (that is, a positive function $w$), one wants to characterize
minimizers of the weighted perimeter $\int_{\partial E} w$ among those sets $E$
having weighted volume $\int_{E} w$ equal to a given constant.
A set solving the problem, if it exists, is called an isoperimetric set or
simply a minimizer. This question, and the associated isoperimetric inequalities with weights,
have attracted much attention recently;
see for example \cite{MP}, \cite{MM}, \cite{CP}, \cite{FM}, and \cite{M}.

The solution to the isoperimetric problem in $\R^n$ with a
weight $w$ is known only for very few weights, even in the case $n=2$.
For example, in
$\R^n$ with the Gaussian weight $w(x)=e^{-|x|^2}$ all the minimizers are half-spaces \cite{B,CFMP},
and with $w(x)=e^{|x|^2}$ all the minimizers are balls centered at the origin \cite{RCBM}.
Instead, mixed Euclidean-Gaussian densities lead to minimizers that have a more intricate
structure of revolution \cite{FMP}.
The radial homogeneous weight $|x|^\alpha$ has been considered very recently.
In the plane ($n=2$), minimizers for this homogeneous weight depend on the values of $\alpha$.
On the one hand, Carroll-Jacob-Quinn-Walters \cite{CJQW} showed that when $\alpha<-2$ all minimizers are $\R^2\setminus B_r(0)$, $r>0$,
and that when $-2\leq \alpha<0$ minimizers do not exist.
On the other hand, when $\alpha>0$ Dahlberg-Dubbs-Newkirk-Tran \cite{DDNT} proved that all minimizers are circles passing through the origin
(in particular, not centered at the origin).
Note that this result shows that even radial homogeneous weights may lead to nonradial minimizers.

Weighted isoperimetric inequalities in cones have also been considered.
In these results, the perimeter of $E$ is taken relative to the cone, that is, not counting the part of $\partial E$ that lies on the boundary of the cone.
In \cite{DHHT} D\'iaz-Harman-Howe-Thompson consider again the radial homogeneous weight $w(x)=|x|^\alpha$, with $\alpha>0$, but now in an open convex cone $\Sigma$ of angle $\beta$ in the plane $\R^2$.
Among other things, they prove that there exists $\beta_0\in (0,\pi)$ such that for $\beta<\beta_0$ all minimizers are $B_r(0)\cap \Sigma$, $r>0$, while these circular sets about the origin are not minimizers for $\beta>\beta_0$.

Also related to the weighted isoperimetric problem in cones, the following is a recent result by Brock-Chiaccio-Mercaldo \cite{BCM}.
Assume that $\Sigma$ is any cone in $\R^n$ with vertex at the origin, and consider the isoperimetric problem in $\Sigma$ with any weight $w$.
Then, for $B_R(0)\cap\Sigma$ to be an isoperimetric set for every $R>0$ a necessary condition is that $w$ admits the factorization
\begin{equation}\label{weightproduct}w(x)=A(r)B(\Theta),\end{equation}
where $r=|x|$ and $\Theta=x/r$.
Our main result ---Theorem \ref{th1} below--- gives a sufficient condition on $B(\Theta)$ whenever $\Sigma$ is convex and $A(r)=r^\alpha$, $\alpha\geq0$, to guarantee that $B_R(0)\cap \Sigma$ are isoperimetric sets.

Our result states that Euclidean balls centered at the origin solve the isoperimetric problem in any open convex cone $\Sigma$
of $\R^n$ (with vertex at the origin) for a certain class of nonradial weights.
More precisely, our result applies to all nonnegative continuous
weights $w$ which are positively homogeneous of degree $\alpha\geq0$ and such that $w^{1/\alpha}$ is concave in the cone $\Sigma$ in case $\alpha>0$.
That is, using the previous notation, $w=r^\alpha B(\Theta)$ must be continuous in $\overline\Sigma$ and $rB^{1/\alpha}(\Theta)$ must be concave in $\Sigma$.
We also solve weighted \emph{anisotropic} isoperimetric problems in cones for the same class of weights.
In these weighted anisotropic problems, the perimeter of a domain $\Omega$ is given by
\[\int_{\partial\Omega\cap\Sigma} H(\nu(x))w(x)dS,\]
where $\nu(x)$ is the unit outward normal to $\partial\Omega$ at $x$, and $H$ is a positive,
positively homogeneous of degree one, and convex function.
Our results were announced in the recent note \cite{CRS-CRAS}.

In the isotropic case, making the first variation of weighted perimeter (see \cite{RCBM}), one sees that the (generalized) mean curvature of $\partial\Omega$ with the density $w$ is
\begin{equation}\label{genmean}
H_w=H_{\rm eucl}+\frac{1}{n}\frac{\partial_\nu w}{w},
\end{equation}
where $\nu$ is is the unit outward normal to $\partial\Omega$ and $H_{\rm eucl}$ is the Euclidean mean
curvature of $\partial\Omega$.
It follows that balls centered at the origin intersected with the cone have constant mean curvature whenever
the weight is of the form \eqref{weightproduct}.
However, as we have seen in several examples presented above, it is far from being true that the solution
of the isoperimetric problem for all the weights satisfying \eqref{weightproduct} are balls centered at
the origin intersected with the cone.
Our result provides a large class of nonradial weights for which, remarkably,
Euclidean balls centered at the origin (intersected with the cone) solve the isoperimetric problem.

In Section \ref{examples} we give a list of weights $w$ for which our result applies.
Some concrete examples are the following:
\[\textrm{dist}(x,\partial\Sigma)^\alpha\quad \textrm{  in } \Sigma\subset \R^n,\]
where $\Sigma$ is any open convex cone and $\alpha\geq0$ (see example (ii) in Section \ref{examples});
\[x^{a}y^b z^{c},\ (ax^r+by^r+cz^r)^{\alpha/r},\ \textrm{ or }\ \frac{xyz}{xy+yz+zx}\quad
\textrm{ in } \Sigma=(0,\infty)^3,\]
where $a,b,c$ are nonnegative numbers, $r\in(0,1]$ or $r<0$, and $\alpha>0$ (see examples (iv), (v), and (vii), respectively);
\[\frac{x-y}{\log x-\log y},\ \frac{x^{a+1}y^{b+1}}{\left(x^p+y^p\right)^{1/p}},
\ \textrm{ or }\ x\log\left(\frac{y}{x}\right)\quad \textrm{ in } \Sigma=(0,\infty)^2,\]
where $a\geq0$, $b\geq0$, and $p>-1$ (see examples (viii) and (ix));
\[\left(\frac{\sigma_l}{\sigma_k}\right)^{\frac{\alpha}{l-k}},\quad 1\leq k<l<n,\quad \textrm{ in } \Sigma=\{\sigma_1>0,\ ...,\sigma_l>0\},\]
where $\sigma_k$ is the elementary symmetric function of order $k$, defined by $\sigma_k(x)=\sum_{1\leq i_1<\cdots<i_k\leq n}x_{i_1}\cdots x_{i_k}$, and $\alpha>0$ (see example (vii)).

Our isoperimetric inequality with an homogeneous weight $w$ of degree $\alpha$
in a convex cone $\Sigma\subset\R^n$ yields as a
consequence the following Sobolev inequality with best constant.
If $D=n+\alpha$, $1\leq p<D$, and $p_*=\frac{pD}{D-p}$, then
\begin{equation}\label{Sob}
\left(\int_{\Sigma}|u|^{p_*}w(x)dx\right)^{1/p_*}\leq C_{w,p,n}
\left(\int_{\Sigma}|\nabla u|^pw(x)dx\right)^{1/p}
\end{equation}
for all smooth functions $u$ with compact support in $\R^n$ ---in particular,
not necessarily vanishing on $\partial\Sigma$.
This is a consequence of our isoperimetric
inequality, Theorem \ref{th1}, and a weighted radial rearrangement
of Talenti \cite{T}, since these two results yield the radial symmetry of minimizers.

The proof of our main result follows the ideas introduced by the first author \cite{CSCM,CDCDS} in a new proof of the classical isoperimetric inequality (the classical isoperimetric
inequality corresponds to the weight $w\equiv1$ and the cone $\Sigma=\R^n$).
Our proof consists of applying the ABP method
to an appropriate linear Neumann problem involving the operator
\[w^{-1}\textrm{div}(w\nabla u)=\Delta u+\frac{\nabla w}{w}\cdot \nabla u,\]
where $w$ is the weight.

\medskip
\subsection{\sc The setting}\label{subs1.1}
\hspace{\fill}
\smallskip

The classical isoperimetric problem in convex cones was solved by P.-L. Lions and F. Pacella \cite{LP} in 1990.
Their result states that among all sets $E$ with fixed volume inside an open convex cone $\Sigma$,
the balls centered at the vertex of the cone minimize the perimeter relative to the cone
(recall that the part of the boundary of $E$ that lies on the boundary of the cone is not counted).

Throughout the paper $\Sigma$ is an open convex cone in $\R^n$. Recall that given a measurable set $E\subset \R^n$ the relative perimeter of $E$ in $\Sigma$ is defined by
\[P(E;\Sigma) := \sup \left\{\int_E \div\,\sigma \,dx\,:\,\sigma\in C^1_c(\Sigma, \R^n),\ |\sigma|\le 1 \right\}.\]
When $E$ is smooth enough,
\[P(E;\Sigma)=\int_{\partial E\cap\Sigma}dS.\]
The isoperimetric inequality in cones of Lions and Pacella reads as follows.

\begin{thm}[\cite{LP}] \label{isopcon}
Let $\Sigma$ be an open convex cone in $\mathbb R^n$ with vertex at $0$, and $B_1:=B_1(0)$.
Then,
\begin{equation}\label{pp}
\frac{P(E;\Sigma)}{|E\cap\Sigma|^{\frac{n-1}{n}} }\ge \frac{P(B_1;\Sigma)}{|B_1\cap\Sigma|^{\frac{n-1}{n}}}
\end{equation}
for every measurable set $E\subset \R^n$ with $|E\cap\Sigma|<\infty$.
\end{thm}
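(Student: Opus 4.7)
The plan is to adapt the ABP-method strategy that the first author introduced for the classical isoperimetric inequality (corresponding to $\Sigma=\R^n$). First I would reduce, via the definition of relative perimeter and standard approximation, to the case of a smooth bounded $E$ whose boundary meets $\partial\Sigma$ transversally. Setting $c:=P(E;\Sigma)/|E\cap\Sigma|$, I would solve the mixed Neumann problem
\begin{equation*}
\begin{cases}
\Delta u=c & \text{in } E\cap\Sigma,\\
\partial_{\nu_E}u=1 & \text{on } \partial E\cap\Sigma,\\
\partial_{\nu_\Sigma}u=0 & \text{on } \partial\Sigma\cap\overline E,
\end{cases}
\end{equation*}
whose compatibility condition $c\,|E\cap\Sigma|=P(E;\Sigma)$ is built into the choice of $c$.

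The core geometric step is to show that the lower contact set
\[\Gamma:=\{x\in E\cap\Sigma\,:\,u(y)\ge u(x)+\nabla u(x)\cdot(y-x)\text{ for all }y\in\overline{E\cap\Sigma}\}\]
has gradient image covering $B_1\cap\Sigma$ up to a null set. Given $p$ in the interior of $B_1\cap\Sigma$, I minimize $\phi_p(x):=u(x)-p\cdot x$ on $\overline{E\cap\Sigma}$ and analyze the minimizer $x_0$. If $x_0\in\partial E\cap\Sigma$, minimality combined with $\partial_{\nu_E}u=1$ would force $p\cdot\nu_E\ge 1$, contradicting $|p|<1$. If $x_0\in\partial\Sigma\cap E$, minimality and $\partial_{\nu_\Sigma}u=0$ would give $p\cdot\nu_\Sigma\ge 0$; but since $\Sigma$ is a convex cone with vertex at $0$, every $q\in\Sigma$ satisfies $q\cdot\nu_\Sigma\le 0$, with strict inequality when $q$ is interior to $\Sigma$, again a contradiction. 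Hence $x_0$ lies in the interior, giving $\nabla u(x_0)=p$ and $x_0\in\Gamma$.

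Next, applying the area formula together with $D^2u\ge 0$ on $\Gamma$ and the arithmetic-geometric mean inequality will give
\[|B_1\cap\Sigma|\le |\nabla u(\Gamma)|\le \int_\Gamma \det D^2 u\,dx \le \int_\Gamma\left(\frac{\Delta u}{n}\right)^{\!n}\!dx = \left(\frac{c}{n}\right)^{\!n}|E\cap\Sigma|.\]
Substituting $c=P(E;\Sigma)/|E\cap\Sigma|$ and rearranging produces $P(E;\Sigma)/|E\cap\Sigma|^{(n-1)/n}\ge n\,|B_1\cap\Sigma|^{1/n}$. Since integration in polar coordinates shows $P(B_1;\Sigma)=n\,|B_1\cap\Sigma|$, the right-hand side equals $P(B_1;\Sigma)/|B_1\cap\Sigma|^{(n-1)/n}$, proving \eqref{pp}.

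I expect the main obstacle to be the regularity of $u$ near the corner $\partial E\cap\partial\Sigma$ and at singular rays of $\partial\Sigma$, where the Neumann data jumps and $u$ may fail to be $C^{1,1}$. I would address this by smoothing both $E$ and $\Sigma$, applying the ABP-type contact-set argument to the regularized problem, and then passing to the limit via lower semicontinuity of $P(\cdot;\Sigma)$; since the decisive estimates only use information on $\Gamma$, which sits in the interior, the boundary regularity deficit can be bypassed by a suitable approximation of $u$.
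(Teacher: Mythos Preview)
Your heuristic is exactly the one the paper itself sketches in Section~\ref{elementsproof} as the ``essential'' idea (problem~\eqref{pb_neu_pes} with $w\equiv1$): solve the mixed Neumann problem on $E\cap\Sigma$ with data~$1$ on the free boundary and~$0$ on the cone wall, then run the ABP contact argument. The paper, however, explicitly declines to carry it out this way, because the jump in the Neumann data along the edge $\partial E\cap\partial\Sigma$ obstructs $C^1$ regularity of $u$ up to the full boundary, and the touching step needs precisely that regularity to exclude minima of $u-p\cdot x$ on the boundary. Your proposed remedies do not close this: smoothing $\Sigma$ to a nearby smooth cone still leaves the dihedral edge where $\partial E$ meets $\partial\Sigma$; rounding that edge destroys the partition of $\partial(E\cap\Sigma)$ into the two pieces on which your distinct Neumann conditions live; and the remark that ``$\Gamma$ sits in the interior so boundary regularity can be bypassed'' is circular---it is boundary regularity that you invoke to show the minimizer of $u-p\cdot x$ is interior and hence lies in $\Gamma$.

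The paper's actual proof (Section~\ref{sec4} specialized to $w\equiv1$, $H=\|\cdot\|_2$) sidesteps the mixed problem altogether by encoding the cone into an anisotropic gauge. Let $H_0$ be the gauge whose Wulff shape is $W_0:=B_1\cap\Sigma$; then $H_0(\nu)=0$ whenever $\nu$ is an outward normal to $\partial\Sigma$, and $H_0\le\|\cdot\|_2$ everywhere, so $P_{H_0}(\,\cdot\,;\Sigma)\le P(\,\cdot\,;\Sigma)$ with equality on $B_1\cap\Sigma$. It therefore suffices to prove~\eqref{pp} with $P$ replaced by $P_{H_0}$. Since $H_0$ vanishes on cone-wall normals, one may approximate any competitor by smooth $\Omega$ with $\overline\Omega\subset\Sigma$ and solve the \emph{single} Neumann problem $\Delta u=b_\Omega$ in $\Omega$, $\partial_\nu u=H_0(\nu)$ on $\partial\Omega$, for which $u\in C^\infty(\overline\Omega)$. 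The touching argument now gives $B_1\cap\Sigma\subset\nabla u(\Gamma_u)$ directly, because $p\in B_1\cap\Sigma$ is exactly the condition $p\cdot\nu<H_0(\nu)$ for all $\nu\in S^{n-1}$. After that, your AM--GM computation and the identity $P(B_1;\Sigma)=n\,|B_1\cap\Sigma|$ finish the proof verbatim.
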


The assumption of convexity of the cone can not be removed. In the same
paper \cite{LP} the authors give simple examples of nonconvex cones for which
inequality \eqref{pp} does not hold.
The idea is that for cones having two disconnected
components, \eqref{pp} is false since it pays less perimeter to concentrate all the volume in
one of the two subcones.
A connected (but nonconvex) counterexample is then obtained by joining the two components by a conic open thin set.

The proof of Theorem \ref{isopcon} given in \cite{LP} is based on the Brunn-Minkowski inequality
\[|A+B|^{\frac1n}\geq |A|^{\frac1n}+|B|^{\frac1n},\]
valid for all nonempty measurable sets $A$ and $B$ of $\mathbb R^n$ for which $A+B$ is also measurable; see \cite{G} for more information on this inequality.
As a particular case of our main result, in this paper we provide
a totally different proof of Theorem~\ref{isopcon}.
This new proof is based on the ABP method.

Theorem \ref{isopcon} can be deduced from a degenerate case of the classical Wulff inequality stated in Theorem \ref{wulffthm} below.
This is because the convex set $B_1\cap\Sigma$ is the Wulff shape \eqref{wulffshape} associated to some appropriate anisotropic perimeter.
As explained below in Section \ref{elementsproof}, this idea is crucial in the proof of our main result.
This fact has also been used recently by Figalli and Indrei \cite{FI} to prove a quantitative isoperimetric inequality in convex cones.
From it, one deduces that balls centered at the origin are the unique minimizers in \eqref{pp} up to translations that leave invariant the cone (if they exist).
This had been established in \cite{LP} in the particular case when $\partial\Sigma\setminus\{0\}$ 
is smooth (and later in \cite{RiRo}, which also classified stable hypersurfaces in smooth cones).

The following is the notion of anisotropic perimeter.
We say that a function $H$ defined in $\R^n$ is a \emph{gauge} when
\begin{equation}\label{gauge}
H\textrm{ is nonnegative, positively homogeneous of degree one, and convex}.
\end{equation}
Somewhere in the paper we may require a function to be homogeneous; by this we always mean positively
homogeneous.

Any norm is a gauge, but a gauge may vanish on some unit vectors.
We need to allow this case since it will occur in our new proof of Theorem \ref{isopcon}
---which builds from the cone $\Sigma$ a gauge that is not a norm.

The anisotropic perimeter associated to the gauge $H$ is given by
\[P_{H}(E) := \sup \left\{\int_E \div\,\sigma \,dx\,:\, \sigma\in C^1_c(\R^n, \R^n),\ \sup_{H(y)\le 1} \left(\sigma(x)\cdot y\right)\le 1\ \textrm{for}\ x\in\R^n  \right\},\]
where $E\subset\R^n$ is any measurable set.
When $E$ is smooth enough one has
\[P_{H}(E) = \int_{\partial E}H\bigl(\nu(x)\bigr)dS,\]
where $\nu(x)$ is the unit outward normal at $x\in\partial E$.

The Wulff shape associated to $H$ is defined as
\begin{equation}\label{wulffshape}
W=\{ x\in\mathbb R^n \ :\ x\cdot\nu < H(\nu)\ \text{ for all }\nu\in S^{n-1}\}.
\end{equation}
We will always assume that $W\neq \varnothing$. Note that $W$ is an open set with $0\in\overline W$.
To visualize $W$, it is useful to note that it is the intersection of the half-spaces
$\{x\cdot\nu<H(\nu)\}$ among all $\nu\in S^{n-1}$.
In particular, $W$ is a convex set.

From the definition \eqref{wulffshape} of the Wulff shape it follows that,
given an open convex set $W\subset\R^n$ with $0\in\overline W$,
there is a unique gauge $H$ such that $W$ is the Wulff shape associated to $H$.
Indeed, it is uniquely defined by
\begin{equation}\label{uniqueH}
H(x)=\inf\bigl\{t\in\R:\ W\subset\{z\in\R^n\,:\,z\cdot x<t\}\bigr\}.
\end{equation}

Note that, for each direction $\nu\in S^{n-1}$, $\{x\cdot\nu=H(\nu)\}$ is a supporting hyperplane of $W$.
Thus, for almost every point $x$ on $\partial W$ ---those for which the outer normal $\nu(x)$ exists--- it holds
\begin{equation}\label{normalW}
x\cdot\nu(x)=H(\nu(x))\quad \textrm{a.e. on}\ \partial W.
\end{equation}
Note also that, since $W$ is convex, it is a Lipschitz domain.
Hence, we can use the divergence theorem to find the formula
\begin{equation}\label{per/vol}
P_{H}(W)=\int_{\partial W} H(\nu(x))dS=\int_{\partial W}x\cdot\nu(x)dS=\int_W \div(x)dx = n|W|,
\end{equation}
relating the volume and the anisotropic perimeter of $W$.

When $H$ is positive on $S^{n-1}$ then it is natural to introduce its dual gauge $H^\circ$, as in \cite{AFTL}.
It is defined by
\[H^\circ(z)=\sup_{H(y)\le 1} z\cdot y.\]
Then, the last condition on $\sigma$ in the definition of $P_H(\cdot)$ is equivalent to $H^\circ(\sigma)\leq1$ in $\R^n$,
and the Wulff shape can be written as $W=\{H^\circ<1\}$.

Some typical examples of gauges are
\[H(x)=\|x\|_p=\bigl(|x_1|^p+\cdots+|x_n|^p\bigr)^{1/p},\qquad 1\leq p\leq \infty.\]
Then, we have that $W=\{x\in\mathbb R^n:\ \|x\|_{p'}<1\}$, where ${p'}$ is such that $\frac1p+\frac{1}{p'}=1$.

The following is the celebrated Wulff inequality.

\begin{thm}[\cite{W,T1,T2}]\label{wulffthm}
Let $H$ be a gauge in $\R^n$ which is positive on $S^{n-1}$, and let $W$ be its associated Wulff shape.
Then, for every measurable set $E\subset \R^n$ with $|E|<\infty$, we have
\begin{equation}\label{ppp}
\frac{P_{H}(E)}{|E|^{\frac{n-1}{n}} }\ge \frac{P_{H}(W)}{|W|^{\frac{n-1}{n}}}.
\end{equation}
Moreover, equality holds if and only if $E=aW+b$ for some $a>0$ and $b\in\R^n$ except for a set of measure zero.
\end{thm}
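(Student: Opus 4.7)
The plan is to carry out the ABP-based argument alluded to in the introduction (the first author's new proof of the classical isoperimetric inequality), now adapted to the anisotropic setting. After a standard approximation one reduces to the case in which $E$ is a bounded smooth domain and $H$ is smooth and positive on $S^{n-1}$ (obtained by mollifying the given gauge while preserving one-homogeneity and positivity). For such data, I would solve the linear Neumann problem
\begin{equation*}
\begin{cases}
\Delta u = \dfrac{P_H(E)}{|E|} & \text{in } E,\\
\partial_\nu u = H(\nu) & \text{on } \partial E.
\end{cases}
\end{equation*}
The compatibility condition is satisfied precisely because $\int_{\partial E} H(\nu)\,dS = P_H(E)$, so classical Neumann theory delivers a $C^{2,\alpha}$ solution $u$.

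Next I would introduce the lower contact set
\begin{equation*}
\Gamma_u := \{x\in E : u(y)\ge u(x) + \nabla u(x)\cdot(y-x)\ \text{for all}\ y\in\overline E\},
\end{equation*}
on which $D^2u(x)\ge 0$. The geometric heart of the argument is the inclusion $W\subset \nabla u(\Gamma_u)$. To prove it, fix $p\in W$ and let $x_0\in\overline E$ be a minimizer of $v(x):=u(x)-p\cdot x$. If $x_0\in\partial E$, the outer normal derivative inequality for $v$ combined with the Neumann condition would give $H(\nu(x_0))=\partial_\nu u(x_0)\le p\cdot \nu(x_0)$, contradicting the defining property \eqref{wulffshape} of $W$, namely $p\cdot\nu<H(\nu)$ for every $\nu\in S^{n-1}$. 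Hence $x_0$ is an interior minimizer, so $\nabla u(x_0)=p$ and $x_0\in\Gamma_u$.

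With this inclusion in hand, I would combine the area formula applied to the map $\nabla u\colon \Gamma_u\to \R^n$ with the arithmetic-geometric mean inequality for the symmetric nonnegative matrix $D^2u$ on $\Gamma_u$:
\begin{equation*}
|W|\le |\nabla u(\Gamma_u)|\le \int_{\Gamma_u}\det D^2u\, dx \le \int_E \left(\frac{\Delta u}{n}\right)^n dx = \left(\frac{P_H(E)}{n|E|}\right)^n|E|.
\end{equation*}
Rearranging gives $P_H(E)\ge n|W|^{1/n}|E|^{(n-1)/n}$, and since \eqref{per/vol} yields $P_H(W)=n|W|$, this is exactly \eqref{ppp}.

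The main obstacles are technical rather than structural. The approximation argument must be handled with care: mollifying $H$ while preserving both positivity on $S^{n-1}$ and one-homogeneity, approximating general measurable $E$ of finite measure by bounded smooth sets, and passing to the limit in $P_H$. The more delicate part is the equality case. One needs to track equality through each inequality above: saturation of AM-GM forces $D^2u = \frac{\Delta u}{n}\iden$ on $\Gamma_u$, the area formula step forces $|\nabla u(\Gamma_u)\setminus W|=0$, and matching of total volumes forces $|E\setminus\Gamma_u|=0$. Integrating $D^2u=\lambda\,\iden$ shows that $u$ is a shifted multiple of $|x|^2$, so that $\nabla u$ is affine; together with $\nabla u(E)=W$ up to measure zero this yields $E=aW+b$.
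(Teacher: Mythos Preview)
Your proposal is correct and follows essentially the same approach as the paper: solve the Neumann problem $\Delta u = P_H(E)/|E|$ with $\partial_\nu u = H(\nu)$, prove $W\subset\nabla u(\Gamma_u)$ by the contact argument, and combine the area formula with AM--GM on the eigenvalues of $D^2u$. The only refinement the paper adds in the equality analysis is to note that $\Gamma_u$ is relatively closed in the open set $E$, so $|\Gamma_u|=|E|$ actually forces $\Gamma_u=E$ as sets (not merely up to measure zero), which is what justifies integrating $D^2u=a\,\mathrm{Id}$ over a connected open set; the paper also remarks that this method yields the equality characterization only when $H|_{S^{n-1}}$ is smooth, since the mollification of $H$ loses that information.
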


This result was first stated without proof by Wulff \cite{W} in 1901.
His work was followed by Dinghas \cite{D}, who studied the problem within the class of convex polyhedra.
He used the Brunn-Minkowski inequality.
Some years later, Taylor \cite{T1,T2} finally proved Theorem \ref{wulffthm} among sets of finite perimeter;
see \cite{T3,FonMu,Mc} for more information on this topic.
As a particular case of our technique, in this paper we provide a new proof of
Theorem~\ref{wulffthm}. It is based on the ABP method applied to a linear Neumann problem.
It was Robert McCann (in a personal communication around 2000) who pointed out that the
first author's proof of the classical isoperimetric inequality
also worked in the anisotropic case.

\medskip
\subsection{\sc Results}
\hspace{\fill}
\smallskip

The main result of the present paper, Theorem \ref{th1} below,
is a weighted isoperimetric inequality which extends the two previous classical
inequalities (Theorems \ref{isopcon} and \ref{wulffthm}).
In particular, in Section \ref{sec2}
we will give a new proof of the classical Wulff theorem (for smooth domains) using the ABP method.

Before stating our main result, let us define the weighted anisotropic
perimeter relative to an open cone $\Sigma$.
The weights $w$ that we consider will always be continuous functions in $\overline\Sigma$, positive and locally Lipschitz in $\Sigma$,
and homogeneous of degree $\alpha\geq0$.
Given a gauge $H$ in $\R^n$ and a weight $w$, we define (following \cite{BBF}) the weighted anisotropic perimeter relative to the cone $\Sigma$ by
\[P_{w,H}(E;\Sigma) := \sup \biggl\{\int_{E\cap \Sigma}\hspace{-3mm} \div(\sigma w)dx\,:\, \sigma\in X_{w,\Sigma}\,,\sup_{H(y)\le 1} (\sigma(x)\cdot y)\le 1\ \textrm{for}\ x\in\Sigma \biggr\},\]
where $E\subset\R^n$ is any measurable set with finite Lebesgue measure and
\[X_{w,\Sigma} := \left\{ \sigma\in \bigl(L^\infty(\Sigma)\bigr)^n \,:\,{\rm div}(\sigma w)\in L^\infty \bigl(\Sigma\bigr) \ \ {\rm and}\ \ \sigma w =0 \ \,{\rm on}\ \,\partial \Sigma \right\}.\]
It is not difficult to see that
\begin{equation} \label{defperiint}
P_{w,H}(E;\Sigma) = \int_{\partial E \cap \Sigma}H\bigl(\nu(x)\bigr)w(x)dS\end{equation}
whenever $E$ is smooth enough.

The definition of $P_{w,H}$ is the same as the one given in \cite{BBF}.
In their notation, we are taking $d\mu = w\chi_\Sigma \,dx$ and $X_{w,\Sigma}=X_\mu$.

Moreover, when $H$ is the Euclidean norm we denote
\[P_w(E;\Sigma):=P_{w,\|\cdot\|_2}(E;\Sigma).\]
When $w\equiv1$ in $\Sigma$ and $H$ is the Euclidean norm we recover the definition of $P(E;\Sigma)$; see \cite{BBF}.

Given a measurable set $F\subset \Sigma$, we denote by $w(F)$ the weighted volume of $F$
\[ w(F):= \int_F w\,dx .\]
We also denote
\[D=n+\alpha.\]
Note that the Wulff shape $W$ is independent of the weight $w$.
Next we use that if $\nu$ is the unit outward normal to $W\cap \Sigma$, then $x\cdot\nu(x)=H(\nu(x))$ a.e. on $\partial W\cap \Sigma$, $x\cdot\nu(x)=0$ a.e. on $\overline W\cap\partial\Sigma$, and $x\cdot \nabla w(x) = \alpha w(x)$ in $\Sigma$.
This last equality follows from the homogeneity of degree $\alpha$ of $w$.
Then, with a similar argument as in \eqref{per/vol}, we find
\begin{equation}\label{formula-per=Dvol-for-Wulff}
\begin{split} P_{w,H}(W;\Sigma)&= \int_{\partial W\cap \Sigma} H(\nu(x))w(x)dS= \int_{\partial W\cap \Sigma} x\cdot\nu(x)\,w(x)dS\\
&=\int_{\partial(W\cap \Sigma)}x\cdot\nu(x)w(x)dS= \int_{W\cap\Sigma} \div(x w(x))dx\\
&=\int_{W\cap\Sigma} \left\{nw(x)+x\cdot\nabla w(x)\right\}dx= D\,w(W\cap \Sigma).
\end{split}
\end{equation}
Here ---and in our main result that follows--- for all quantities to make sense we need to assume that $W\cap \Sigma\neq \varnothing$.
Recall that both $W$ and $\Sigma$ are open convex sets but that $W\cap \Sigma= \varnothing$ could happen.
This occurs for instance if $H|_{S^{n-1}\cap\Sigma}\equiv0$.
On the other hand, if $H>0$ on all $S^{n-1}$ then $W\cap \Sigma\neq\varnothing$.

The following is our main result.

\begin{thm}\label{th1}
Let $H$ be a gauge in $\R^n$, i.e., a function satisfying \eqref{gauge}, and $W$ its associated Wulff shape defined by \eqref{wulffshape}.
Let $\Sigma$ be an open convex cone in $\mathbb R^n$ with vertex at the origin, and such that
$W\cap \Sigma\neq \varnothing$.
Let $w$ be a continuous function in $\overline\Sigma$, positive in $\Sigma$, and positively homogeneous of degree $\alpha\geq0$.
Assume in addition that $w^{1/\alpha}$ is concave in $\Sigma$ in case $\alpha>0$.

Then, for each measurable set $E\subset\R^n$ with $w(E\cap \Sigma)<\infty$,
\begin{equation}\label{mainresult}
\frac{P_{w,H}(E;\Sigma) }{w(E\cap \Sigma)^{\frac{D-1}{D}} }\geq \frac{P_{w,H}(W;\Sigma) }{w(W\cap \Sigma)^{\frac{D-1}{D}}},
\end{equation}
where $D=n+\alpha$.
\end{thm}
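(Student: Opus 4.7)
\medskip
\noindent\emph{Proof sketch.} The plan is to adapt Cabr\'e's ABP-based proof of the classical isoperimetric inequality to the present weighted anisotropic setting in a cone. After a standard approximation (smoothing $\partial E$, perturbing $\Sigma$ and $H$, and extending $w$ to a smooth strictly positive weight on a neighborhood of $\overline\Sigma$), we may assume that $E$ is a bounded smooth domain whose boundary meets $\partial\Sigma$ transversally, that $H\in C^\infty(\R^n\setminus\{0\})$ is strictly positive on $S^{n-1}$, and that $w$ is smooth and positive on $\overline\Sigma$. Using the identity $P_{w,H}(W;\Sigma)=D\,w(W\cap\Sigma)$ from \eqref{formula-per=Dvol-for-Wulff}, the inequality \eqref{mainresult} is equivalent to
\begin{equation*}
w(W\cap\Sigma) \,\le\, (b_E/D)^D \, w(E\cap\Sigma), \qquad b_E := \frac{P_{w,H}(E;\Sigma)}{w(E\cap\Sigma)}.
\end{equation*}

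I would then solve the linear Neumann problem
\begin{equation*}
\Delta u + \tfrac{\nabla w}{w}\cdot\nabla u = b_E \;\text{ in }\; E\cap\Sigma, \quad \partial_\nu u = H(\nu) \;\text{ on }\; \partial E\cap\Sigma, \quad \partial_\nu u = 0 \;\text{ on }\; \partial\Sigma\cap\overline{E},
\end{equation*}
whose compatibility condition (integrate against $w\,dx$ and use \eqref{defperiint}) gives precisely the above value of $b_E$. Let
\[
\Gamma_u := \bigl\{x\in\overline{E\cap\Sigma}: u(y)\ge u(x)+\nabla u(x)\cdot(y-x)\ \text{for all}\ y\in\overline{E\cap\Sigma}\bigr\}
\]
be the lower contact set. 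The next step is to show $W\cap\Sigma\subset\nabla u(\Gamma_u)$: given $p\in W\cap\Sigma$, minimize $v_p(x):=u(x)-p\cdot x$ on $\overline{E\cap\Sigma}$. An interior minimum yields a contact point with $\nabla u=p$; a minimum on $\partial E\cap\Sigma$ would force $H(\nu)\le p\cdot\nu$, contradicting $p\in W$; a minimum at $x_0\in\partial\Sigma\cap E$ requires $p\cdot\nu(x_0)\ge 0$, while convexity of $\Sigma$ combined with $p\in\Sigma$ forces $p\cdot\nu(x_0)\le 0$, so $p\cdot\nu(x_0)=0$, and the Neumann condition $\partial_\nu u=0$ then gives $\nabla u(x_0)=p$ as well.

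The key pointwise estimate on $\Gamma_u$ would be
\begin{equation*}
w\bigl(\nabla u(x)\bigr)\,\det D^2 u(x) \,\le\, (b_E/D)^D\, w(x).
\end{equation*}
Since $D^2 u(x)\ge 0$ at $x\in\Gamma_u$, AM--GM gives $\det D^2 u\le (\Delta u/n)^n$. Write the PDE as $\Delta u = b_E - s$ with $s:=(\nabla w(x)/w(x))\cdot p$ and $p:=\nabla u(x)$; weighted AM--GM with weights $n/D$ and $\alpha/D$ applied to $\lambda:=\Delta u/n$ and $s/\alpha$ gives $\lambda^n(s/\alpha)^\alpha\le (b_E/D)^D$. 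It then suffices to check $(s/\alpha)^\alpha\ge w(p)/w(x)$, which is equivalent to $g(p)\le\nabla g(x)\cdot p$ for $g:=w^{1/\alpha}$; this follows from the supporting-hyperplane inequality for the concave function $g$ at $x$ together with Euler's identity $\nabla g(x)\cdot x=g(x)$ for degree-$1$-homogeneous $g$ (and it also shows $s>0$, so $s/\alpha$ is well-defined). Applying the area formula to the Lipschitz map $\nabla u|_{\Gamma_u}$ and using $W\cap\Sigma\subset\nabla u(\Gamma_u)$ then produces
\[
w(W\cap\Sigma) \,\le\, \int_{\Gamma_u} w\bigl(\nabla u(x)\bigr)\det D^2 u(x)\,dx \,\le\, (b_E/D)^D\, w(E\cap\Sigma),
\]
which is exactly the reduction of \eqref{mainresult} made at the start.

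\emph{Main obstacle.} The most delicate part will be the analysis near $\partial\Sigma$: obtaining enough regularity for $u$ across the corners where $\partial E$ meets $\partial\Sigma$ so that the area formula and the Hessian bound hold rigorously, and handling the boundary contact points in $\partial\Sigma\cap E$ uniformly with the interior ones (perhaps by an additional approximation keeping $\partial E$ at positive distance from $\partial\Sigma$). A second difficulty is the approximation itself: since the given $w$ may only be continuous on $\partial\Sigma$ and positive merely in the open cone, it has to be replaced by a smooth positive weight while preserving (at least approximately) both its homogeneity and the concavity of $w^{1/\alpha}$, so that the limiting form of \eqref{mainresult} is the claimed one.
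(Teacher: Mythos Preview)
Your core ABP argument---the Neumann problem for $w^{-1}\mathrm{div}(w\nabla u)$, the touching argument giving $W\cap\Sigma\subset\nabla u(\Gamma_u)$, and the chain \textit{weighted AM--GM $+$ concavity of $w^{1/\alpha}$} yielding $w(\nabla u)\det D^2u\le (b_E/D)^D w$---is exactly what the paper does. So the heart of the proof is right.

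The gap is precisely the one you flag as ``main obstacle,'' and your suggested fix does not quite work as stated. You propose to solve the mixed Neumann problem on $E\cap\Sigma$ itself, with $\partial_\nu u=H(\nu)$ on $\partial E\cap\Sigma$ and $\partial_\nu u=0$ on $\partial\Sigma\cap\overline E$. At the edge where these two pieces of boundary meet, the Neumann data is genuinely discontinuous (it jumps from $H(\nu)>0$ to $0$), so $u$ cannot be $C^1$ up to the boundary there; this spoils both the touching argument and the area-formula step. You then suggest approximating by domains at positive distance from $\partial\Sigma$. But this fails in general when $w>0$ somewhere on $\partial\Sigma$: the relative perimeter $P_{w,H}(E;\Sigma)$ does \emph{not} count $\partial E\cap\partial\Sigma$, whereas once you push $E$ strictly inside the cone that portion becomes interior boundary and \emph{is} counted. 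So the approximating perimeters need not converge to $P_{w,H}(E;\Sigma)$.

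The paper resolves this with a calibration trick that you are missing. One introduces the gauge $H_0$ whose Wulff shape is $W_0:=W\cap\Sigma$ (so $H_0\le H$, with equality on the outward normals to $\partial W\cap\Sigma$, and $H_0(\nu)=0$ for $\nu$ normal to $\partial\Sigma$). Then $P_{w,H_0}(E;\Sigma)\le P_{w,H}(E;\Sigma)$ for every $E$, while $P_{w,H_0}(W_0;\Sigma)=P_{w,H}(W;\Sigma)$ and $w(W_0)=w(W\cap\Sigma)$; hence it suffices to prove \eqref{mainresult} with $H_0$ in place of $H$. Because $H_0$ vanishes on the normals to $\partial\Sigma$, one \emph{can} now approximate $E$ by smooth domains $\Omega_k$ with $\overline{\Omega_k}\subset\Sigma$ without losing perimeter in the limit, and on such $\Omega_k$ the Neumann problem (with data $H_0(\nu)$ on all of $\partial\Omega_k$) is uniformly elliptic with smooth coefficients, so $u\in C^\infty(\overline{\Omega_k})$ and your ABP argument runs cleanly. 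This device---reducing to $H_0$ so that domains strictly inside the cone suffice---is the missing idea.

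Two minor remarks. First, in your boundary analysis on $\partial\Sigma$: for $p\in\Sigma$ one actually has $p\cdot\nu(x_0)<0$ strictly (since $p$ is interior to the convex cone), so the minimum of $v_p$ cannot occur on $\partial\Sigma$ at all---no need to argue that $\nabla u(x_0)=p$ there. Second, your proposed smoothing of $w$ to be positive on $\overline\Sigma$ while keeping $w^{1/\alpha}$ concave is delicate; in the paper's setup this is avoided because once $\overline\Omega\subset\Sigma$, one only needs to smooth $w|_{\overline\Omega}$, where $w$ is already positive and locally Lipschitz.
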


\begin{rem}\label{villani}
Our key hypothesis that $w^{1/\alpha}$ is a concave function is equivalent to a
natural curvature-dimension bound (in fact, to the nonnegativeness of the
Bakry-\'Emery Ricci tensor in dimension $D=n+\alpha$). This was suggested to us by C\'edric Villani, and
has also been noticed by Ca\~nete and Rosales (see Lemma~3.9 in \cite{CaRo}).
More precisely, we see the cone $\Sigma\subset\R^n$ as a Riemannian manifold of dimension $n$ equipped
with a reference measure $w(x)dx$. We are also given a ``dimension'' $D=n+\alpha$.
Consider the Bakry-\'Emery Ricci tensor, defined by
$$
\text{Ric}_{D,w} = \text{Ric} - \nabla^2 \log w - \frac{1}{D-n} \nabla \log w \otimes \nabla \log w.
$$
Now, our assumption $w^{1/\alpha}$ being concave is equivalent to
\begin{equation}\label{Bakry}
\text{Ric}_{D,w} \geq 0.
\end{equation}
Indeed, since $\text{Ric}\equiv 0$ and $D-n=\alpha$, \eqref{Bakry} reads as
$$
- \nabla^2 \log w^{1/\alpha} - \nabla \log w^{1/\alpha} \otimes \nabla \log w^{1/\alpha} \geq 0,
$$
which is the same condition as $w^{1/\alpha}$ being concave. Condition \eqref{Bakry} is called
a curvature-dimension bound; in the terminology of \cite{V} we say that
$\text{CD}(0,D)$ is satisfied by $\Sigma\subset\R^n$ with the reference measure $w(x)dx$.

In addition, C. Villani pointed out that optimal transport techniques could also lead
to weighted isoperimetric inequalities in convex cones; see Section~\ref{proof-related}.
\end{rem}

Note that the shape of the minimizer is $W\cap\Sigma$, and that $W$ depends only on $H$ and not on the weight $w$ neither on the cone $\Sigma$.
In particular, in the isotropic case $H=\|\cdot\|_{2}$ we find the following surprising fact.
Even that the weights that we consider are not radial (unless $w\equiv1$), still Euclidean balls centered at the origin (intersected with the cone) minimize this isoperimetric quotient.
The only explanation that one has a priori for this fact is that Euclidean balls centered at 0 have constant generalized mean curvature when the weight is homogeneous, as pointed out in \eqref{genmean}.
Thus, they are candidates to minimize perimeter for a given volume.

Note also that we allow $w$ to vanish somewhere (or everywhere) on $\partial\Sigma$.

Equality in \eqref{mainresult} holds whenever $E\cap \Sigma=rW\cap \Sigma$, where $r$ is any positive number.
However, in this paper we do not prove that $W\cap\Sigma$ is the unique
minimizer of \eqref{mainresult}.
The reason is that our proof involves the solution of an elliptic
equation and, due to an important issue on its regularity, we need to
approximate the given set $E$ by smooth sets.
In a future work with E. Cinti and A. Pratelli we will refine the analysis in the proof
of the present article and obtain a quantitative version of our
isoperimetric inequality in cones. In particular, we will deduce uniqueness of minimizers
(up to sets of measure zero).
The quantitative version will be proved using the techniques of the present paper (the ABP method
applied to a linear Neumann problem) together with the ideas of Figalli-Maggi-Pratelli~\cite{FMP-quantitative}.

In the isotropic case, a very recent result of Ca\~nete and Rosales \cite{CaRo} deals with
the same class of weights as ours. They allow not only positive homogeneities $\alpha>0$, but also negative 
ones $\alpha \leq -(n-1)$.
They prove that if a smooth, compact, and orientable hypersurfaces in a smooth convex cone is  stable 
for weighted perimeter (under variations preserving weighted volume), then it must be a sphere centered at 
the vertex of the cone.
In \cite{CaRo} the stability of such spheres is proved for $\alpha \leq -(n-1)$, but not for
$\alpha >0$.  However, as pointed out in \cite{CaRo}, when $\alpha >0$ their result used together 
with ours give 
that spheres centered at the vertex are the unique minimizers among smooth hypersurfaces.

Theorem \ref{th1} contains the classical isoperimetric inequality, its version for convex cones, and the classical Wulff inequality.
Indeed, taking $w\equiv 1$, $\Sigma=\R^n$, and $H = \|\cdot\|_{2}$ we recover the classical isoperimetric inequality with optimal constant.
Still taking $w\equiv 1$ and $H=\|\cdot\|_2$ but now letting $\Sigma$  be any open convex cone of $\R^n$ we have the isoperimetric inequality in convex cones of Lions and Pacella (Theorem \ref{isopcon}).
Moreover, if we take $w\equiv 1$ and $\Sigma=\R^n$ but we let $H$ be some other gauge we obtain the Wulff inequality (Theorem \ref{wulffthm}).

A criterion of concavity for homogeneous functions of degree 1 can be found for example in \cite[Proposition 10.3]{M-Llibre}, and reads as follows.
A nonnegative, $C^2$, and homogeneous of degree 1 function $\Phi$ on $\R^n$ is concave if and only if the restrictions $\Phi(\theta)$ of $\Phi$ to one-dimensional circles about the origin satisfy
\[\Phi''(\theta)+\Phi(\theta)\leq0.\]
Therefore, it follows that a nonnegative, $C^2$, and homogeneous weight of degree $\alpha>0$ in the plane $\R^2$, $w(x)=r^\alpha B(\theta)$, satisfies that $w^{1/\alpha}$ is concave in $\Sigma$ if and only if
\[(B^{1/\alpha})''+B^{1/\alpha}\leq0.\]

\begin{rem}\label{stable}
Let $w$ be an homogeneous weight of degree $\alpha$, and consider the isotropic isoperimetric problem in a cone $\Sigma\subset\R^n$.
Then, by the proofs of Proposition 3.6 and Lemma 3.8 in \cite{RCBM} the set $B_1(0)\cap \Sigma$ is stable if and only if
\begin{equation}\label{stable}
\int_{ S^{n-1}\cap\Sigma} |\nabla_{S^{n-1}} u|^2w\,dS\geq (n-1+\alpha)\int_{ S^{n-1}\cap\Sigma} |u|^2w\,dS
\end{equation}
for all functions $u\in C^\infty_c(S^{n-1}\cap \Sigma)$ satisfying
\begin{equation}\label{constraint}
\int_{ S^{n-1}\cap\Sigma}uw\,dS=0.\end{equation}
Stability being a necessary condition for minimality, from Theorem \eqref{th1} we deduce the following.
If $\alpha>0$, $\Sigma$ is convex, and $w^{1/\alpha}$ is concave in $\Sigma$, then \eqref{stable} holds.

For instance, in dimension $n=2$, inequality \eqref{stable} reads as
\begin{equation}\label{wirtin}
\int_0^\beta (u')^2w\,d\theta\geq (1+\alpha)\int_0^\beta u^2w\,d\theta\qquad {\rm whenever}\quad\ \int_0^\beta uw\,d\theta=0,\end{equation}
where $0<\beta\leq\pi$ is the angle of the convex cone $\Sigma\subset\R^2$.
This is ensured by our concavity condition on the weight $w$,
\begin{equation}\label{concav}
\left(w^{1/\alpha}\right)''+w^{1/\alpha}\leq 0\qquad {\rm in}\ (0,\beta).\end{equation}
Note that, even in this two-dimensional case, it is not obvious that this condition on $w$ yields 
\eqref{stable}-\eqref{constraint}.
The statement \eqref{wirtin} is an extension of Wirtinger's inequality (which corresponds to the case $w\equiv1$, $\alpha=0$, $\beta=2\pi$).
It holds, for example, with $w=\sin^\alpha\theta$ on $S^1$ ---since \eqref{concav} is satisfied by this weight.
Another extension of Wirtinger's inequality (coming from the density $w=r^\alpha$) is given in \cite{DDNT}.
\end{rem}

In Theorem \ref{th1} we assume that $w$ is homogeneous of degree $\alpha$.
In our proof, this assumption is essential in order that the paraboloid in \eqref{parabola} solves the PDE in
\eqref{pb_neu_pes}, as explained in Section \ref{elementsproof}.
Due to the homogeneity of $w$, the exponent $D=n+\alpha$  can be found just by a scaling argument in our inequality \eqref{mainresult}.
Note that this exponent $D$ has a dimension flavor if one compares \eqref{mainresult} with \eqref{pp} or with \eqref{ppp}.
Also, it is the exponent for the volume growth, in the sense that $w(B_r(0)\cap \Sigma)=Cr^D$ for all $r>0$.
The interpretation of $D$ as a dimension is more clear in the following example that motivated our work.

\begin{rem} \label{remmon}
The monomial weights
\begin{equation}\label{monomial}
w(x)=x_1^{A_1}\cdots x_n^{A_n}\qquad \textrm{in}\quad \Sigma=\{x\in\mathbb R^n\,:\, x_i>0
\textrm{ whenever }A_i>0\},\end{equation}
where $A_i\geq0$, $\alpha=A_1+\cdots+A_n$, and $D=n+A_1+\cdots+A_n$, are important examples for which \eqref{mainresult} holds.
The isoperimetric inequality ---and the corresponding Sobolev inequality \eqref{Sob}--- with the above monomial weights were studied by the first two authors in \cite{CR,CR2}.
These inequalities arose in \cite{CR} while studying reaction-diffusion problems with symmetry of double revolution.
A function $u$ has symmetry of double revolution when $u(x,y)= u(|x|,|y|)$, with  $(x,y) \in \mathbb R^{D}= \mathbb R^{A_1+1}\times\mathbb R^{A_2+1}$
(here we assume $A_i$ to be positive integers).
In this way, $u=u(x_1,x_2)=u(|x|,|y|)$ can be seen as a function in $\mathbb R^2=\mathbb R^n$, and it is here where the Jacobian for the Lebesgue measure in $\R^D=\R^{A_1+1}\times\R^{A_2+1}$, $x_1^{A_1}x_2^{A_2}=|x|^{A_1}|y|^{A_2}$, appears.
A similar argument under multiple axial symmetries shows that, when $w$ and $\Sigma$ are given by (\ref{monomial}) and all $A_i$ are nonnegative integers, and $H$ is the Euclidean norm, Theorem \ref{th1} follows from the classical isoperimetric inequality in $\mathbb R^D$; see \cite{CR2} for more details.

In \cite{CR} we needed to show a Sobolev inequality of the type \eqref{Sob} in $\R^2$ with the weight and the cone given by \eqref{monomial}.
As explained above, when $A_i$ are all nonnegative integers this Sobolev inequality follows from the classical one in dimension $D$.
However, in our application the exponents $A_i$ were not integers ---see \cite{CR}---, and thus the Sobolev inequality was not known.
We showed a nonoptimal version (without the best constant) of that Sobolev inequality in dimension $n=2$ in \cite{CR}, and later we proved in \cite{CR2} the optimal one in all dimensions $n$, obtaining the best constant and extremal functions for the inequality.
In both cases, the main tool to prove these Sobolev inequalities was an isoperimetric inequality with the same weight.
\end{rem}

An immediate consequence of Theorem \ref{th1} is the following weighted isoperimetric inequality in $\R^n$ for symmetric sets and even weights.
It follows from our main result taking $\Sigma=(0,+\infty)^n$.

\begin{cor}\label{cor1}
Let $w$ be a nonnegative continuous function in $\R^n$, even with respect to each variable, homogeneous of degree $\alpha>0$,
and such that $w^{1/\alpha}$ is concave in $(0,\infty)^n$.
Let $E\subset\R^n$ be any measurable set, symmetric with respect to each coordinate hyperplane $\{x_i=0\}$, and with $|E|<\infty$.
Then,
\begin{equation}\label{mainresultcor}
\frac{P_{w}(E;\R^n) }{|E|^{\frac{D-1}{D}} }\geq \frac{P_{w}(B_1;\R^n) }{|B_1|^{\frac{D-1}{D}}},
\end{equation}
where $D=n+\alpha$ and $B_1$ is the unit ball in $\R^n$.
\end{cor}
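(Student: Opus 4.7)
The plan is to obtain Corollary \ref{cor1} as the special case of Theorem \ref{th1} with $\Sigma=(0,\infty)^n$ and $H=\|\cdot\|_2$, followed by a reflection argument across the coordinate hyperplanes.

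First, I would verify the hypotheses of Theorem \ref{th1} in this setting. The set $\Sigma=(0,\infty)^n$ is an open convex cone in $\R^n$ with vertex at the origin; the Euclidean gauge $H(x)=\|x\|_2$ is strictly positive on $S^{n-1}$ and has Wulff shape $W=B_1$, so $W\cap\Sigma\neq\varnothing$; and by assumption $w$ is continuous on $\overline\Sigma$, positively homogeneous of degree $\alpha>0$, with $w^{1/\alpha}$ concave in $\Sigma$ (positivity of $w$ in $\Sigma$ follows from the concavity and homogeneity of $w^{1/\alpha}$ once $w\not\equiv 0$). Theorem \ref{th1} applied to the given $E$ then yields
\[\frac{P_w(E;\Sigma)}{w(E\cap\Sigma)^{(D-1)/D}}\ \geq\ \frac{P_w(B_1;\Sigma)}{w(B_1\cap\Sigma)^{(D-1)/D}}.\]

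Second, I would pass from this relative inequality in $\Sigma$ to a global inequality in $\R^n$. For each $\epsilon\in\{-1,+1\}^n$, let $R_\epsilon(x)=(\epsilon_1x_1,\ldots,\epsilon_nx_n)$ and $\Sigma_\epsilon=R_\epsilon(\Sigma)$. The $2^n$ open orthants $\Sigma_\epsilon$ partition $\R^n$ up to the Lebesgue null set $\bigcup_i\{x_i=0\}$; each $R_\epsilon$ is an isometry preserving $w$ (by the hypothesis that $w$ is even in each variable); and $R_\epsilon(E\cap\Sigma_\epsilon)=E\cap\Sigma$ by the symmetry of $E$. Summing over $\epsilon$ therefore gives
\[w(E)=2^n\,w(E\cap\Sigma),\qquad w(B_1)=2^n\,w(B_1\cap\Sigma),\]
\[P_w(E;\R^n)=2^n\,P_w(E;\Sigma),\qquad P_w(B_1;\R^n)=2^n\,P_w(B_1;\Sigma).\]
Substituting these four identities into the inequality above cancels a common factor $2^{n/D}$ on both sides and yields
\[\frac{P_w(E;\R^n)}{w(E)^{(D-1)/D}}\ \geq\ \frac{P_w(B_1;\R^n)}{w(B_1)^{(D-1)/D}},\]
which is \eqref{mainresultcor}, reading the denominator as the weighted volume $w(\cdot)^{(D-1)/D}$ (the only form in which the stated inequality can be scale invariant when $\alpha>0$).

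The step that will require the most care is the perimeter identity $P_w(E;\R^n)=2^n\,P_w(E;\Sigma)$ for a merely measurable set of finite weighted perimeter. The key point is to show that $\mathcal{H}^{n-1}(\partial^*E\cap\{x_i=0\})=0$ for every $i$, so that after discarding these null sets the reduced boundary $\partial^*E$ is the disjoint union of $2^n$ isometric pieces, one in each open orthant. Symmetry of $E$ under the reflection $R_i$ at $x_i=0$ would force the measure-theoretic outer normal at any reduced-boundary point on $\{x_i=0\}$ to be both parallel to $e_i$ (as a point of the fixed hyperplane) and invariant under $R_i$ (by symmetry), which is impossible, so such points form an $\mathcal{H}^{n-1}$-null set. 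An alternative, more elementary, route is to approximate $E$ in weighted perimeter and volume by smooth symmetric sets --- obtained by taking a smooth approximation of $E\cap\Sigma$ and symmetrizing it through the reflections $R_\epsilon$ --- and to pass to the limit in the identity, where it is obvious orthant by orthant.
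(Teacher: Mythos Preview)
Your approach is exactly what the paper indicates in its one-sentence justification: apply Theorem~\ref{th1} with $\Sigma=(0,\infty)^n$ and $H=\|\cdot\|_2$, then unfold by reflection across the coordinate hyperplanes. You also correctly observe that the denominators in \eqref{mainresultcor} must be read as weighted volumes $w(E)^{(D-1)/D}$ and $w(B_1)^{(D-1)/D}$; otherwise the inequality fails to be scale invariant when $\alpha>0$.

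There is one small gap. Your argument for $\mathcal H^{n-1}(\partial^*E\cap\{x_i=0\})=0$ is not right as written: being on the fixed hyperplane of $R_i$ does not force the measure-theoretic normal to be parallel to $e_i$. What symmetry of $E$ under $R_i$ actually gives at a reduced-boundary point $x$ with $x_i=0$ is $R_i\nu(x)=\nu(x)$, i.e.\ $\nu(x)\perp e_i$, and there is no second constraint producing a contradiction. Fortunately the full equality $P_w(E;\R^n)=2^n P_w(E;\Sigma)$ is not needed. Since the open orthants $\Sigma_\epsilon$ are pairwise disjoint, one always has
\[
P_w(E;\R^n)\ \ge\ \sum_{\epsilon}P_w(E;\Sigma_\epsilon)\ =\ 2^n\,P_w(E;\Sigma),
\]
and this inequality, together with the easy identities $w(E)=2^n w(E\cap\Sigma)$, $w(B_1)=2^n w(B_1\cap\Sigma)$, and the genuine equality $P_w(B_1;\R^n)=2^n P_w(B_1;\Sigma)$ (valid because $\partial B_1\cap\{x_i=0\}$ has $\mathcal H^{n-1}$-measure zero), already yields \eqref{mainresultcor}. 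Your alternative route via approximation by smooth symmetric sets also closes the gap.
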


The symmetry assumption on the sets that we consider in Corollary \ref{cor1} is satisfied in some applications arising in nonlinear problems, such as the one in \cite{CR} explained in Remark \ref{remmon}.
Without this symmetry assumption, isoperimetric sets in \eqref{mainresultcor} may not be the balls.
For example, for the monomial weight $w(x)=|x_1|^{A_1}\cdots |x_n|^{A_n}$ in $\R^n$, with all $A_i$ positive, $B_1\cap (0,\infty)^n$ is an isoperimetric set,
while the whole ball $B_r$ having the same weighted volume as $B_1\cap (0,\infty)^n$ is not an isoperimetric set (since it has longer perimeter).

We know only of few results where nonradial weights lead to radial minimizers.
The first one is the isoperimetric inequality by Maderna-Salsa \cite{MS} in the upper half plane $\R^2_+$ with the weight $x_2^\alpha$, $\alpha>0$.
To establish their isoperimetric inequality, they first proved the existence of a minimizer for the
perimeter functional under constraint of fixed area, then computed the first variation
of this functional, and finally solved the obtained ODE to find all minimizers.
The second result is due to Brock-Chiacchio-Mercaldo \cite{BCM} and extends the one in \cite{MS} by including the weights $x_n^{\alpha}\exp(c|x|^2)$ in $\mathbb R^n_+$, with $\alpha\geq0$ and $c\geq0$.
In both papers it is proved that half balls centered at the origin are the minimizers of the isoperimetric quotient with these weights.
Another one, of course, is our isoperimetric inequality with monomial weights \cite{CR2} explained above (see Remark \ref{remmon}).
At the same time as us, and using totally different methods, Brock, Chiacchio, and Mercaldo \cite{BCM2} have proved an isoperimetric inequality in $\Sigma=\{x_1>0,...,x_n>0\}$ with the weight $x_1^{A_1}\cdots x_n^{A_n}\exp(c|x|^2)$, with $A_i\geq0$ and $c\geq0$.

In all these results, although the weight $x_1^{A_1}\cdots x_n^{A_n}$ is not radial, it has a very special structure.
Indeed, when all $A_1,...,A_n$ are nonnegative \emph{integers} the isoperimetric problem
with the weight $x_1^{A_1}\cdots x_n^{A_n}$ is equivalent to the isoperimetric problem in $\R^{n+A_1+\cdots+A_n}$ for
sets that have symmetry of revolution with respect to the first $A_1+1$ variables,
the next $A_2+1$ variables, ..., and so on until the last $A_n+1$ variables; see Remark \ref{remmon}.
By this observation, the fact that half balls centered at the origin are the minimizers in $\R^{n}_+$ with the weight $x_1^{A_1}\cdots x_n^{A_n}$ or $x_1^{A_1}\cdots x_n^{A_n}\exp(c|x|^2)$, for $c\geq0$ and $A_i$ nonnegative integers, follows from the isoperimetric inequality in $\R^{n+A_1+\cdots+A_n}$ with the weight $\exp(c|x|^2)$, $c\geq0$ (which is a radial weight).
Thus, it was reasonable to expect that the same result for noninteger exponents $A_1,...,A_n$ would also hold ---as it does.

After announcing our result and proof in \cite{CRS-CRAS},
Emanuel Milman showed us a nice geometric
construction that yields the particular case when $\alpha$ is a
nonnegative  {\it integer} in our weighted inequality of Theorem \ref{th1}.
Using this construction, the weighted inequality in a convex cone is obtained as a limit case of the
unweighted Lions-Pacella inequality in a narrow cone of $\R^{n+\alpha}$.
We reproduce it in Remark \ref{milmans} ---see also the blog of Frank Morgan \cite{Mblog}.

\medskip
\subsection{\sc The proof. Related works}
\label{proof-related}
\hspace{\fill}
\smallskip

The proof of Theorem \ref{th1} consists of applying the ABP method to a linear Neumann problem involving the operator $w^{-1}\textrm{div}(w\nabla u)$, where $w$ is the weight.
When $w\equiv 1$, the idea goes back to 2000 in the works \cite{CSCM,CDCDS} of the first author, where the classical isoperimetric inequality in all of $\R^n$ (here $w\equiv1$) was proved with a new method.
It consisted of solving the problem
\[
\left\{ \alignedat2
\Delta u &= b_\Omega  &\quad &\mbox{in } \Omega
\\
\frac{\partial u}{\partial\nu} &=1 &\quad &\textrm{on }\partial\Omega
\endalignedat\right. \]
for a certain constant $b_\Omega$, to produce a bijective map with the gradient of $u$, $\nabla u:\Gamma_{u,1}\longrightarrow B_1$, which leads to the isoperimetric inequality.
Here $\Gamma_{u,1}\subset\Gamma_u\subset\Omega$ and $\Gamma_{u,1}$ is a certain subset of the lower contact set $\Gamma_u$ of $u$ (see Section \ref{elementsproof} for details).
The use of the ABP method is crucial in the proof.

Previously, Trudinger \cite{Trudinger} had given a proof of the classical isoperimetric inequality in 1994 using the theory of Monge-Amp\`ere equations and the ABP estimate.
His proof consists of applying the ABP estimate to the Monge-Amp\`ere problem
\[
\left\{ \alignedat2
 \textrm{det}D^2u &= \chi_\Omega  &\quad &\mbox{in } B_R
\\
u &=0 &\quad &\textrm{on }\partial B_R,
\endalignedat\right. \]
where $\chi_\Omega$ is the characteristic function of $\Omega$ and $B_R=B_R(0)$, and then letting $R\rightarrow\infty$.

Before these two works (\cite{Trudinger} and \cite{CSCM}), there was already a proof of the isoperimetric inequality using a certain map (or coupling).
This is Gromov's proof, which used the Knothe map; see \cite{V}.

After these three proofs, in 2004 Cordero-Erausquin, Nazaret, and Villani \cite{CNV} used the Brenier map
from optimal transportation to give a beautiful proof of the anisotropic isoperimetric inequality; see also \cite{V}.
More recently, Figalli-Maggi-Pratelli \cite{FMP-quantitative} established a sharp quantitative version of the anisotropic isoperimetric inequality, using also the Brenier map.
In the case of the Lions-Pacella isoperimetric inequality, this has been done by Figalli-Indrei \cite{FI} very recently. 
As mentioned before, the proof in the present article
is also suited for a quantitative version, as we will show in a future work with Cinti and Pratelli.

After announcing our result and proof in \cite{CRS-CRAS}, we have been told that
optimal transportation techniques \`a la \cite{CNV}
could also be used to prove weighted isoperimetric inequalities
in certain cones. C. Villani pointed out that this is mentioned in the Bibliographical Notes to Chapter~21
of his book \cite{V}. A. Figalli showed it to us with a computation
when the cone
is a halfspace $\{x_n>0\}$ equipped with
the weight $x_n^\alpha$.

\medskip
\subsection{\sc Applications}
\hspace{\fill}
\smallskip

Now we turn to some applications of Theorems \ref{th1} and Corollary \ref{cor1}.

First, our result leads to weighted Sobolev inequalities with best constant in convex cones of $\R^n$.
Indeed, given any smooth function $u$ with compact support in $\R^n$
(we do not assume $u$ to vanish on $\partial\Sigma$), one uses the coarea formula and
Theorem \ref{th1} applied to each of the level sets of $u$.
This establishes the Sobolev inequality \eqref{Sob} for $p=1$.
The constant $C_{w,1,n}$ obtained in this way is optimal, and coincides with the best constant in our
isoperimetric inequality \eqref{mainresultcor}.

When $1<p<D$, Theorem \ref{th1} also leads to the
Sobolev inequality \eqref{Sob} with best constant. This is a consequence of our isoperimetric
inequality and a weighted radial rearrangement
of Talenti \cite{T}, since these two results yield the radial symmetry of minimizers.
See \cite{CR2} for details in the case of monomial weights
$w(x)=|x_1|^{A_1}\cdots |x_n|^{A_n}$.

If we use Corollary \ref{cor1} instead of Theorem \ref{th1}, with the same argument one finds the Sobolev
inequality
\begin{equation}\label{Sob2}
\left(\int_{\R^n}|u|^{p_*}w(x)dx\right)^{1/p_*}\leq C_{w,p,n}\left(\int_{\R^n}|\nabla u|^pw(x)dx\right)^{1/p},
\end{equation}
where $p_*=\frac{pD}{D-p}$, $D=n+\alpha$, and $1\leq p<D$.
Here, $w$ is any weight satisfying the hypotheses of Corollary \ref{cor1}, and $u$ is any smooth function
with compact support in $\R^n$ which is symmetric with respect to each variable $x_i$, $i=1,...,n$.

We now turn to applications to the symmetry of solutions to nonlinear PDEs.
It is well known that the classical isoperimetric inequality yields some radial symmetry results for semilinear or quasilinear elliptic equations.
Indeed, using the Schwartz rearrangement that preserves $\int F(u)$ and decreases $\int \Phi(|\nabla u|)$, it is immediate to show that minimizers of some energy functionals (or quotients) involving these quantities are radially symmetric; see \cite{PS,T}.
Moreover, P.-L. Lions \cite{L} showed that in dimension $n=2$ the isoperimetric inequality yields also the radial symmetry of all positive solutions
to the semilinear problem $-\Delta u=f(u)$ in $B_1$, $u=0$ on $\partial B_1$, with $f\geq0$ and $f$ possibly discontinuous.
This argument has been extended in three directions: for the $p$-Laplace operator, for cones of $\R^n$, and for Wulff shapes, as explained next.

On the one hand, the analogue of Lions radial symmetry result but in dimension $n\geq3$ for the $p$-Laplace operator was proved with $p=n$ by Kesavan and Pacella in \cite{KP},
and with $p\geq n$ by the third author in \cite{S}.
Moreover, in \cite{KP} it is also proved that positive solutions to the following semilinear equation with mixed boundary conditions
\begin{equation}\label{mixed}
\left\{ \alignedat2
 -\Delta_p u &= f(u)  &\quad &\mbox{in } B_1\cap\Sigma
\\
u &= 0  &\quad &\mbox{on } \partial B_1\cap\Sigma
\\
\frac{\partial u}{\partial\nu} &=0 &\quad &\textrm{on }B_1\cap\partial\Sigma
\endalignedat\right. \end{equation}
have radial symmetry whenever $p=n$.
Here, $B_1$ is the unit ball and $\Sigma$ any open convex cone.
This was proved by using Theorem \ref{isopcon} and the argument of P.-L. Lions mentioned above.

On the other hand, Theorem \ref{wulffthm} is used to construct a Wulff shaped rearrangement in \cite{AFTL}.
This yields that minimizers to certain nonlinear variational equations that come from anisotropic gradient norms have Wulff shaped level sets.
Moreover, the radial symmetry argument in \cite{L} was extended to this anisotropic case in \cite{BFK}, yielding the same kind of result for positive solutions of nonlinear equations involving the operator $Lu={\rm div}\left(H(\nabla u)^{p-1}\nabla H(\nabla u)\right)$ with $p=n$.
In the same direction, in a future paper \cite{RS} we will use Theorem \ref{th1} to obtain Wulff shaped
symmetry of critical functions of weighted anisotropic functionals such as
\[\int\bigl\{ H^p(\nabla u)- F(u)\bigr\}w(x)\,dx.\]
Here, $w$ is an homogeneous weight satisfying the hypotheses of Theorem \ref{th1} and $H$ is any norm in $\R^n$.
As in \cite{S}, we will allow $p\neq n$ but with some conditions on $F$ in case $p<n$.

Related to these results, when $f$ is Lipschitz, Berestycki and Pacella \cite{BP} proved that any positive solution to problem \eqref{mixed} with $p=2$ in a convex spherical sector $\Sigma$ of $\R^n$ is radially symmetric.
They used the moving planes method.

\medskip
\subsection{\sc Plan of the paper}
\hspace{\fill}
\smallskip

The rest of the article is organized as follows.
In Section \ref{examples} we give examples of weights for which our result applies.
In Section \ref{elementsproof} we introduce the elements appearing in the proof of Theorem \ref{th1}.
To illustrate these ideas, in Section \ref{sec2} we give the proof of the classical Wulff theorem via the ABP method.
In Section \ref{sec3} we prove Theorem \ref{th1} in the simpler case $w\equiv0$ on $\partial\Sigma$ and $H=\|\cdot\|_2$.
Finally, in Section \ref{sec4} we present the whole proof of Theorem \ref{th1}.

\bigskip
\section{Examples of weights}
\label{examples}

When $w\equiv 1$ our main result yields the classical isoperimetric inequality, its version for convex cones, and also the Wulff theorem.
On the other hand, given an open convex cone $\Sigma\subset \R^n$ (different than the whole space and a half-space) there is a large family of functions that are homogeneous of degree one and concave in $\Sigma$.
Any positive power of one of these functions is an admissible weight for Theorem \ref{th1}.
Next we give some concrete examples of weights $w$ for which our result applies.
The key point is to check that the homogeneous function of degree one $w^{1/\alpha}$ is concave.

\begin{enumerate}
\item[(i)] Assume that $w_1$ and $w_2$ are concave homogeneous functions
of degree one in an open convex cone $\Sigma$.
    Then, $w_1^{a}w_2^{b}$ with $a\geq0$ and $b\geq0$, $(w_1^{r}+w_2^{r})^{\alpha/r}$ with $r\in(0,1]$ or $r<0$, and $\min\{w_1,w_2\}^\alpha$, satisfy the hypotheses of Theorem \ref{th1} (with $\alpha=a+b$ in the first case).
    More generally, if $F:[0,\infty)^2\rightarrow \R_+$ is positive, concave, homogeneous of degree 1, and nondecreasing in each variable, then one can take $w=F(w_1,w_2)^\alpha$, with $\alpha>0$.
\item[(ii)] The distance function to the boundary of any convex set is concave when defined in the convex set.
    On the other hand, the distance function to the boundary of any cone is homogeneous of degree 1.
    Thus, for any open convex cone $\Sigma$ and any $\alpha\geq0$,
    \[w(x)=\textrm{dist}(x,\partial\Sigma)^\alpha\]
    is an admissible weight.
    When the cone is $\Sigma=\{x_i>0,\ i=1,...,n\}$, this weight is exactly $\min\{x_1,...,x_n\}^\alpha$.
\item[(iii)] If the concavity condition is satisfied by a weight $w$ in a convex cone $\Sigma'$ then it is also satisfied in any convex subcone $\Sigma\subset \Sigma'$.
    Note that this gives examples of weights $w$ and cones $\Sigma$ in which $w$ is positive on $\partial\Sigma\setminus\{0\}$.
\item[(iv)] Let $\Sigma_1,...,\Sigma_k$ be convex cones and $\Sigma=\Sigma_1\cap\cdots\cap \Sigma_k$.
    Let
    \[\delta_i(x)={\rm dist}(x,\partial \Sigma_i).\]
    Then, the weight
    \[w(x)=\delta_1^{A_1}\cdots\delta_k^{A_k},\ \ x\in\Sigma,\]
    with $A_1\geq0,...,A_k\geq0$, satisfies the hypotheses of Theorem \ref{th1}.
    This follows from (i), (ii), and (iii).
    Note that when $k=n$ and $\Sigma_{i}=\{x_{i}>0\}$, $i=1,...,n$, then $\Sigma=\{x_1>0,...,x_n>0\}$ and we obtain the monomial weight
    \[w(x)=x_1^{A_1}\cdots x_n^{A_n}.\]

\item[(v)] In the cone $\Sigma=(0,\infty)^n$, the weights
    \[w(x)=\left(A_1x_1^{1/p}+\cdots+A_nx_n^{1/p}\right)^{\alpha p},\]
    for $p\geq1$, $A_i\geq0$, and $\alpha>0$, satisfy the hypotheses of Theorem \ref{th1}.
    Similarly, one may take the weights
    \[w(x)=\left({\frac{A_1}{x_1^{r}}+\cdots+\frac{A_n}{x_n^{r}}}\right)^{-\alpha/r},\]
    with $r>0$, or the limit case
    \[w(x)=\min\{A_1x_1,\cdots,A_nx_n\}^{\alpha}.\]
    This can be showed using the Minkowski inequality.
    More precisely, the first one can be showed using the classical Minkowski inequality with exponent $p\geq1$, while the second one using a reversed Minkowski inequality that holds for exponents $p=-r<0$.

    In these examples $\Sigma=(0,\infty)^n$ and therefore by Corollary \ref{cor1} we find that among all sets $E\subset\R^n$ which are symmetric with respect to each coordinate hyperplane, Euclidean balls centered at the origin minimize the isoperimetric quotient with these weights.
\item[(vi)] Powers of \emph{hyperbolic polynomials} also provide examples of weights.
    An homogeneous polynomial $P(x)$ of degree $k$ defined in $\R^n$ is called {hyperbolic} with respect to 
$a\in\R^n$ provided $P(a)>0$ and for every $\lambda\in \R^n$ the polynomial in $t$, $P(ta+\lambda)$, has exactly $k$ real roots.
    Let $\Sigma$ be the component in $\R^n$, containing $a$, of the set $\{P>0\}$.
    Then, $\Sigma$ is a convex cone and $P(x)^{1/k}$ is a concave function in $\Sigma$; see for example \cite{Garding2} or \cite[Section 1]{CNS}.
    Thus, for any hyperbolic polynomial $P$, the weight
    \[w(x)=P(x)^{\alpha/k}\]
    satisfies the hypotheses of Theorem \ref{th1}.
    Typical examples of hyperbolic polynomials are
    \[P(x)=x_1^2-\lambda_2x_2^2-\cdots-\lambda_nx_n^2\ \ \textrm{in}\ \Sigma=\left\{x_1>
\sqrt{\lambda_2x_2^2+\cdots+\lambda_nx_n^2}\right\},\]
    with $\lambda_2>0$,...,$\lambda_n>0$,
    or the elementary symmetric functions
    \[\sigma_k(x)=\sum_{1\leq i_1<\cdots<i_k\leq n}x_{i_1}\cdots x_{i_k}\quad \textrm{in}\ \Sigma=\{\sigma_1>0,...,\sigma_k>0\}\]
    (recall that $\Sigma$ is defined above as a component of $\{P>0\}$).
    Other examples are
    \[P(x)=\prod_{1\leq i_1<\cdots< i_r\leq n} \sum_{j=1}^r x_{i_j} \ \ \textrm{in}\
\Sigma=\{x_i>0,\ i=1,...,n\},\]
    which have degree $k={n\choose r}$ (this follows by induction from the first statement in example (i);
see also \cite{BGLS}),
or the polynomial $\det(X)$ in the convex cone of symmetric positive definite matrices ---which
we consider in the space $\R^{n(n+1)/2}$.

    The interest in hyperbolic polynomials was originally motivated by an important paper of Garding on linear hyperbolic PDEs \cite{Garding},
    and it is known that they form a rich class;
    see for example \cite{Garding2}, where the same author showed various ways of constructing new hyperbolic polynomials from old ones.
\item[(vii)] If $\sigma_k$ and $\sigma_l$ are the elementary symmetric functions of degree $k$ and $l$, with $1\leq k<l\leq n$, then $\left(\sigma_l/\sigma_k\right)^{\frac{1}{l-k}}$ is concave in the cone $\Sigma=\{\sigma_1>0,...,\sigma_k>0\}$; see \cite{ML}.
    Thus,
    \[w(x)=\left(\frac{\sigma_l}{\sigma_k}\right)^{\frac{\alpha}{l-k}}\]
    is an admissible weight.
    For example, setting $k=n$ and $l=1$ we find that we can take
    \[w(x)=\left(\frac{x_1\cdots x_n}{x_1+\cdots+x_n}\right)^{\frac{\alpha}{n-1}}\]
    in Theorem \ref{th1} or in Corollary \ref{cor1}.
\item[(viii)] If $f:\R\rightarrow\R_+$ is any continuous function which is concave in $(a,b)$, then
    \[w(x)=x_1f\left(\frac{x_2}{x_1}\right)\]
    is an admissible weight in $\Sigma=\{x=(r,\theta)\,:\, \arctan a<\theta<\arctan b\}$.
\item[(ix)] In the cone $\Sigma=(0,\infty)^2\subset\R^2$ one may take
    \[w(x)=\left(\frac{x_1-x_2}{\log x_1-\log x_2}\right)^\alpha\]
    for $\alpha>0$.
    In addition, in the same cone one may also take
    \[w(x)=\frac{1}{e}\left(x_1^{x_1}x_2^{-x_2}\right)^{\frac{\alpha}{x_1-x_2}}.\]
    This can be seen by using (viii) and computing $f$ in each of the two cases.
    When $\alpha=1$, these functions are called the \emph{logarithmic mean} and the \emph{identric mean} of the numbers $x_1$ and $x_2$, respectively.

    Using also (viii) one can check that, in the cone $\Sigma=(0,\infty)^2$, the weight $w(x)=xy(x^p+y^p)^{-1/p}$ is admissible whenever $p>-1$.
    Then, using (i) it follows that
    \[w(x)=\frac{x^{a+1}y^{b+1}}{(x^p+y^p)^{1/p}}\]
    is an admissible weight whenever $a\geq0$, $b\geq0$, and $p>-1$.
\end{enumerate}

\bigskip
\section{Description of the proof}
\label{elementsproof}

The proof of Theorem \ref{th1} follows the ideas introduced by the first author in a new proof of the classical isoperimetric inequality; see \cite{CSCM,CDCDS} or the last edition of Chavel's book \cite{Ch}.
This proof uses the ABP method, as explained next.

The Alexandroff-Bakelman-Pucci (or ABP) estimate
is an $L^\infty$ bound for solutions of the Dirichlet
problem associated to second order uniformly elliptic operators
written in nondivergence form,
\[Lu= a_{ij}(x) \partial_{ij} u +b_i (x) \partial_{i} u + c(x) u,\]
with bounded measurable coefficients in a domain
$\Omega$ of $\R^n$.
It asserts that if $\Omega$ is bounded and $c\leq 0$ in $\Omega$ then,
for every function $u\in C^2(\Omega)\cap C(\overline\Omega)$,
\[\sup_\Omega u\le \sup_{\partial\Omega} u
+ C \, {\rm diam} (\Omega)\, \Vert Lu \Vert_{L^n(\Omega)},\]
where $C$ is a constant depending only on the ellipticity constants of
$L$ and on the $L^n$-norm of the coefficients $b_i$.
The estimate was proven by the previous authors in the sixties using
a technique that is nowadays called ``the ABP method''.
See \cite{CDCDS} and references therein for more information on this estimate.

The proof of the classical isoperimetric inequality in \cite{CSCM,CDCDS}
consists of applying the ABP method
to an appropriate Neumann problem for the Laplacian
---instead of applying it to a Dirichlet problem as
customary.
Namely, to estimate from below $|\partial\Omega|/|\Omega|^{\frac{n-1}{n}}$ for a smooth domain $\Omega$, one considers the problem
\begin{equation} \label{eqlaplace}
\left\{ \alignedat2 \Delta u &= b_\Omega  &\quad &\mbox{in } \Omega
\\ \vspace{1mm}
\frac{\partial u}{\partial\nu} &=1 &\quad &\textrm{on }\partial \Omega.
\endalignedat\right. \end{equation}
The constant $b_\Omega = |\partial\Omega|/|\Omega|$ is chosen so that the problem has a solution.
Next, one proves that $B_1\subset \nabla u(\Gamma_u)$ with a contact argument (for a certain ``contact'' set $\Gamma_u\subset\Omega$),
and then one estimates the measure of $\nabla u(\Gamma_u)$
by using the area formula and the inequality between the geometric
and arithmetic means.
Note that the solution of \eqref{eqlaplace} is
\[u(x)=|x|^2/2\qquad \textrm{when}\ \Omega=B_1,\]
and in this case one verifies that
all the inequalities appearing in this ABP argument are equalities.
After having proved the isoperimetric inequality for smooth domains, an standard
approximation argument extends it to all sets of finite perimeter.

As pointed out by R. McCann, the same
method also yields the Wulff theorem.
For this, one replaces the Neumann data in \eqref{eqlaplace} by $\partial u/\partial\nu=H(\nu)$ and uses the same argument
explained above.
This proof of the Wulff theorem is given in Section~\ref{sec2}.

We now sketch the proof of Theorem \ref{th1} in the isotropic case, that is, when $H=\|\cdot\|_2$.
In this case, optimizers are Euclidean balls centered at the origin intersected with the cone.
First, we assume that $E=\Omega$ is a bounded smooth domain.
The key idea is to consider a similar problem to \eqref{eqlaplace}
but where the Laplacian is replaced by the operator
\[w^{-1}{\rm div}(w\nabla u)=\Delta u+\frac{\nabla w}{w}\cdot\nabla u.\]
Essentially (but, as we will see, this is not exactly as we proceed ---because of a regularity issue), we solve the following Neumann problem in $\Omega\subset\Sigma$:
\begin{equation} \label{pb_neu_pes}
\left\{
\alignedat2
w^{-1}{\rm div\,} (w\nabla u) &= b_\Omega&
\quad &\text{in } \Omega
\\ \vspace{1mm}
\frac{\partial u}{\partial\nu} &=1&\quad &\text{on }\partial \Omega \cap \Sigma
\\ \vspace{1mm}
\frac{\partial u}{\partial\nu} &= 0&\quad &\text{on }\partial \Omega \cap \partial\Sigma,
\endalignedat\right.
\end{equation}
where the constant $b_\Omega$ is again chosen depending on weighted perimeter and volume so that the problem admits a solution.
Whenever $u$ belongs to $C^1(\overline\Omega)$ ---which is not always the case, as discussed below in this section---, by touching
the graph of $u$ by below with planes (as in the proof of the classical isoperimetric inequality explained above) we find that
\begin{equation}\label{ABPset-inclusion}
B_1 \cap \Sigma \subset \nabla u\bigl(\Omega\bigr).
\end{equation}
Then,  using the area formula, an appropriate weighted
geometric-arithmetic means inequality, and the concavity condition on the weight $w$,
we obtain our weighted isoperimetric inequality.
Note that the solution of \eqref{pb_neu_pes} is
\begin{equation}\label{parabola}
u(x)=|x|^2/2\qquad \textrm{when}\ \Omega=B_1\cap \Sigma.\end{equation}
In this case, all the chain of inequalities in our proof become equalities, and this yields the sharpness of the result.

In the previous argument there is an important technical difficulty that comes from
the possible lack of regularity up to the boundary of the solution to the
weighted Neumann problem \eqref{pb_neu_pes}. For instance, if $\Omega\cap \Sigma$ is a smooth
domain that has some part of its boundary lying on $\partial \Sigma$ ---and hence $\partial \Omega$ meets tangentially $\partial \Sigma$---, then
$u$ can not be $C^1$ up to the boundary.
This is because the Neumann condition itself is not continuous and hence $\partial_\nu u$
would jump from $1$ to one $0$ where $\partial \Omega$ meets $\partial \Sigma$.

The fact that $u$ could not be $C^1$ up to the boundary prevents us from using our contact argument to prove
\eqref{ABPset-inclusion}.
Nevertheless, the argument sketched above does work  for smooth domains $\Omega$ well contained in $\Sigma$, that is, satisfying $\overline\Omega\subset\Sigma$.
If, in addition, $w\equiv0$ on $\partial\Sigma$ we can deduce the inequality for all measurable sets $E$ by an approximation argument.
Indeed, if $w\in C(\overline\Omega)$ and $w\equiv0$ on $\partial\Sigma$ then for any domain $U$ with piecewise Lipschitz boundary one has
\[ P_w(U;\Sigma) = \int_{\partial U\cap \Sigma} w\,dS = \int_{\partial U} w\,dS.\]
This fact allows us to approximate any set with finite measure $E\subset\Sigma$ by bounded smooth domains $\Omega_k$ satisfying $\overline{\Omega_k}\subset \Sigma$.
Thus, the proof of Theorem \ref{th1} for weights $w$ vanishing on $\partial\Sigma$ is simpler, and this is why we present it first, in Section \ref{sec3}.

Instead, if $w>0$ at some part of (or everywhere on) $\partial\Sigma$ it is not always possible to find sequences of smooth sets with closure contained in the open cone and approximating in relative perimeter a given measurable set $E\subset \Sigma$.
This is because the relative perimeter does not count the part of the boundary of $E$ which lies on $\partial\Sigma$.
To get around this difficulty (recall that we are describing the proof in the isotropic case, $H\equiv1$) we need to consider an
anisotropic problem in $\R^n$ for which approximation is possible.
Namely, we choose a gauge $H_0$ defined as the gauge associated to the convex set $B_1\cap\Sigma$; see \eqref{uniqueH}.
Then we prove that $P_{w,H_0}(\,\cdot\,;\Sigma)$ is a calibration
of the functional $P_{w}(\,\cdot\,;\Sigma)$, in the following sense.
For all $E\subset \Sigma$ we will have
\[P_{w,H_0}(E;\Sigma) \le P_{w}(E;\Sigma),\]
while for $E=B_1\cap \Sigma$,
\[P_{w,H_0}(B_1;\Sigma) = P_{w}(B_1\cap\Sigma;\Sigma).\]
As a consequence, the isoperimetric inequality with perimeter $P_{w,H_0}(\cdot;\Sigma)$ implies the one with the perimeter $P_w(\cdot;\Sigma)$.
For $P_{w,H_0}(\cdot;\Sigma)$  approximation results are available and, as in the case of $w\equiv 0$ on $\partial\Sigma$, it is enough to consider smooth sets satisfying $\overline\Omega\subset\Sigma$ ---for which there are no regularity problems with the solution of the elliptic problem.

To prove Theorem \ref{th1} for a general anisotropic perimeter $P_{w,H}(\cdot;\Sigma)$ we also consider a 
``calibrated''  perimeter $P_{w,H_0}(\cdot;\Sigma)$, where $H_0$ is now the gauge associated to the convex set $W\cap \Sigma$. Note that, as explained above, even for the isotropic case $H=\|\cdot\|_2$ we have to consider an anisotropic perimeter (associated to $B_1\cap \Sigma$) in order to prove Theorem \ref{th1}.

\bigskip
\section{Proof of the classical Wulff inequality}
\label{sec2}

In this section we prove the classical Wulff theorem for smooth domains by using the ideas introduced by the first author in \cite{CSCM,CDCDS}.
When $H$ is smooth on $S^{n-1}$, we show also that the Wulff shapes are the only smooth sets for which equality is attained.

\begin{proof}[Proof of Theorem \ref{wulffthm}]
We prove the Wulff inequality only for smooth domains, that we denote by $\Omega$ instead of $E$.
By approximation, if \eqref{ppp} holds for all smooth domains then it holds for all sets of finite perimeter.

By regularizing $H$ on $S^{n-1}$ and then extending it homogeneously, we can assume that $H$ is smooth in $\R^n\setminus\{0\}$.
For non-smooth $H$ this approximation argument will yield inequality \eqref{ppp}, but not the equality cases.

Let $u$ be a solution of the Neumann problem
\begin{equation}
\left\{
\alignedat2
\Delta u &= \frac{P_H(\Omega)}{\vert \Omega \vert}
&\quad &\text{in } \Omega\\
\frac{\partial u}{\partial\nu} &=H(\nu) &\quad &\text{on }\partial \Omega ,
\endalignedat
\right.
\label{eqsem}
\end{equation}
where $\Delta$ denotes the Laplace operator and $\partial u /\partial\nu$
the exterior normal derivative of $u$ on $\partial \Omega$.
Recall that $P_H(\Omega)=\int_{\partial \Omega} H\bigr(\nu(x)\bigl)\,dS$.
The constant $P_H(\Omega)/\vert \Omega \vert$
has been chosen so that the problem has a unique solution up to
an additive constant.
Since $H|_{S^{n-1}}$ and $\Omega$ are smooth, we have that $u$ is smooth in~$\overline \Omega$.
See \cite{N} for a recent exposition of these classical facts and for a new Schauder estimate for \eqref{eqsem}.

Consider the lower contact set of $u$, defined by
\begin{equation}
\Gamma_u =\left\{ x \in \Omega \ : \
u(y) \ge u(x) + \nabla u (x) \cdot (y-x)\  \text{ for all } y \in
\overline \Omega \right\} .
\label{lcset}
\end{equation}
It is the set of points where the tangent hyperplane to the
graph of $u$ lies below $u$ in all $\overline \Omega$. We claim that
\begin{equation}
W \subset \nabla u (\Gamma_u) ,
\label{gradmap}
\end{equation}
where $W$ denotes the Wulff shape associated to $H$, given by \eqref{wulffshape}.

To show \eqref{gradmap}, take any $p\in W$, i.e., any $p\in \R^n$ satisfying
\[p\cdot\nu<H(\nu)\ \ \textrm{for all}\ \nu\in S^{n-1}.\]
Let $x\in \overline \Omega$ be a point such that
$$
\min_{y\in \overline \Omega} \,\{ u(y)
-p\cdot y \} = u(x)-p\cdot x
$$
(this is, up to a sign, the Legendre transform of $u$).
If $x\in \partial \Omega$ then the exterior normal derivative of
$u(y)-p\cdot y$ at $x$ would be
nonpositive and hence $(\partial u /\partial\nu)
(x) \le p\cdot \nu < H(\nu)$, a contradiction with the boundary condition of \eqref{eqsem}.
It follows
that $x\in \Omega$ and, therefore, that $x$ is an interior minimum of
the function $u(y)-p\cdot y$. In particular,
$p=\nabla u (x)$ and $x\in \Gamma_u$. Claim \eqref{gradmap} is now proved.
It is interesting to visualize geometrically the proof
of the claim, by considering
the graphs of the functions $p\cdot y + c$ for $c\in \R$. These are
parallel hyperplanes which lie, for $c$ close to $-\infty$,
below the graph of $u$.
We let~$c$ increase and consider the first~$c$
for which there is contact or ``touching'' at a point~$x$.
It is clear geometrically that
$x\not\in\partial\Omega$,
since $p\cdot\nu <H(\nu)$ for all $\nu\in S^{n-1}$ and $\partial u /\partial\nu =H(\nu)$ on
$\partial\Omega$.

Now, from \eqref{gradmap} we deduce
\begin{equation}
\vert W\vert \le \vert \nabla u (\Gamma_u) \vert =
\int_{\nabla u (\Gamma_u)} dp
\le \int_{\Gamma_u} \det D^2u (x) \ dx .
\label{ineqwulff}
\end{equation}
We have applied the area formula to the smooth map $\nabla u : \Gamma_u
\rightarrow \R^n$, and we have used that its Jacobian,
$\det D^2u$, is nonnegative in $\Gamma_u$ by definition of this set.

Next, we use the classical inequality between the geometric and the arithmetic means
applied to the eigenvalues of $D^2u(x)$
(which are nonnegative numbers for $x\in \Gamma_u$). We obtain
\begin{equation}
\det D^2u \le \left( \frac{\Delta u}{n} \right)^n \quad \text{in }
\Gamma_u .
\label{means}
\end{equation}
This, combined with \eqref{ineqwulff} and
$\Delta u \equiv P_H(\Omega)/ \vert \Omega \vert$,
gives
\begin{equation}
\vert W \vert \le \left( \frac{P_H(\Omega)}
{n \vert \Omega \vert} \right)^n \vert \Gamma_u \vert
\le \left( \frac{P_H(\Omega)}
{n \vert \Omega \vert} \right)^n \vert \Omega \vert  .
\label{contact}
\end{equation}
Finally, using that $P_H(W) = n \vert W\vert$ ---see \eqref{per/vol}---, we
conclude that
\begin{equation}
\label{isopfin}
\frac{P_H(W)}{\vert W \vert^{\frac{n-1}{n}}}=
n\vert W \vert^{\frac{1}{n}} \le
\frac{P_H(\Omega)}{\vert \Omega \vert^{\frac{n-1}{n}}}.
\end{equation}

Note that when $\Omega=W$ then the solution of \eqref{eqsem} is $u(x)=\vert x\vert^2/2$ since $\Delta u=n$ and $u_\nu(x)=x\cdot\nu(x)=H\bigl(\nu(x)\bigr)$ a.e. on $\partial W$ ---recall \eqref{normalW}.
In particular, $\nabla u = {\rm Id}$ and all the eigenvalues of $D^2u(x)$ are equal.
Therefore, it is clear that all inequalities (and inclusions) in \eqref{gradmap}-\eqref{isopfin} are
equalities when $\Omega=W$.
This explains why the proof gives the best constant in the inequality.

Let us see next that, when $H|_{S^{n-1}}$ is smooth, the Wulff shaped domains
$\Omega=aW +b$ are the only smooth
domains for which equality occurs in \eqref{ppp}.
Indeed,
if \eqref{isopfin} is an equality then all the inequalities in \eqref{ineqwulff},
\eqref{means}, and \eqref{contact} are also equalities. In particular, we have
$\vert \Gamma_u \vert = \vert \Omega\vert$. Since $\Gamma_u \subset \Omega$,
$\Omega$ is an open set, and $\Gamma_u$ is closed relatively to $\Omega$,
we deduce that $\Gamma_u = \Omega$.

Recall that the geometric and arithmetic means of $n$ nonnegative numbers
are equal if and only if these $n$ numbers are all equal. Hence, the equality
in \eqref{means}
and the fact that $\Delta u$ is constant in $\Omega$ give that
$D^2 u = a{\rm Id}$ in all $\Gamma_u = \Omega$, where $ {\rm Id}$
is the identity matrix and
$a=P_H(\partial\Omega) /(n\vert\Omega\vert)$
is a positive constant. Let $x_0 \in \Omega$ be any given point.
Integrating $D^2u=a{\rm Id}$ on segments from $x_0$,
we deduce that
$$
u(x)=u(x_0)+\nabla u(x_0) \cdot (x-x_0) + \frac{a}{2}\, \vert x-x_0 \vert^2
$$
for $x$ in a neighborhood of $x_0$.
In particular,
$\nabla u (x) = \nabla u (x_0) + a(x-x_0)$ in such a
neighborhood, and hence the map
$\nabla u  - a\iden$ is locally constant.
Since $\Omega$ is connected, we deduce that the map $\nabla u  - a\iden$ is indeed
a constant, say $\nabla u  - a\iden\equiv y_0$.

It follows that
$\nabla u (\Gamma_u) = \nabla u (\Omega) = y_0 + a\Omega$.
By \eqref{gradmap} we know that $W\subset \nabla u (\Gamma_u)=
y_0 + a\Omega$.
In addition,
these two sets have the same measure since equalities occur in \eqref{ineqwulff}.
Thus, $y_0+a\Omega$ is equal to $W$ up to a set of measure zero.
In fact, in the present situation, since  $W$ is convex and $y_0+a\Omega$ is open,
one easily proves that $W =
y_0 + a\Omega$. Hence, $\Omega$ is of the form $\tilde{a} W+\tilde{b}$ for some $\tilde{a}>0$ and
$\tilde{b}\in \R^n$.
\end{proof}

\bigskip
\section{Proof of Theorem \ref{th1}: the case $w\equiv0$ on $\partial\Sigma$ and $H=\|\cdot\|_2$}
\label{sec3}

For the sake of clarity, we present in this section the proof of Theorem \ref{th1} under the assumptions $w\equiv0$ on $\partial\Sigma$ and $H=\|\cdot\|_2$.
The proof is simpler in this case.
Within the proof we will use the following lemma.

\begin{lem}\label{tic}
Let $w$ be a positive homogeneous function of degree $\alpha>0$ in an open cone $\Sigma\subset\R^n$. Then, the following conditions are equivalent:
\begin{itemize}
\item For each $x,z\in\Sigma$, it holds the following inequality:
\[\alpha\left(\frac{w(z)}{w(x)}\right)^{1/\alpha}\leq \frac{\nabla w(x)\cdot z}{w(x)}.\]
\item The function $w^{1/\alpha}$ is concave in $\Sigma$.
\end{itemize}
\end{lem}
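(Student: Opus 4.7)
The plan is to reduce the equivalence to a single substitution together with Euler's identity. Set $v := w^{1/\alpha}$, which inherits positivity and, by the homogeneity of $w$ of degree $\alpha$, is positively homogeneous of degree $1$ on $\Sigma$. By the chain rule,
\[
\nabla w = \alpha\, v^{\alpha-1} \nabla v,
\]
and by Euler's identity for $1$-homogeneous functions,
\[
\nabla v(x)\cdot x = v(x) \quad \text{for all } x\in\Sigma.
\]
These are the only two facts I will need.

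First I would rewrite inequality (a) in terms of $v$. A direct computation gives
\[
\alpha\left(\frac{w(z)}{w(x)}\right)^{1/\alpha} = \alpha\,\frac{v(z)}{v(x)}, \qquad
\frac{\nabla w(x)\cdot z}{w(x)} = \alpha\,\frac{\nabla v(x)\cdot z}{v(x)},
\]
so (a) is equivalent to
\[
v(z) \leq \nabla v(x)\cdot z \quad\text{for all } x,z\in\Sigma.
\]
Writing $\nabla v(x)\cdot z = \nabla v(x)\cdot x + \nabla v(x)\cdot(z-x)$ and using Euler's identity on the first term, this becomes exactly
\[
v(z) \leq v(x) + \nabla v(x)\cdot (z-x) \quad\text{for all } x,z\in\Sigma,
\]
which is the standard supporting hyperplane characterization of concavity of a differentiable function on the convex set $\Sigma$. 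Since every step of this translation is reversible, this establishes both implications at once.

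The only delicate point is regularity: the statement of the lemma involves $\nabla w$, so the computation above requires differentiability of $w$ (equivalently of $v$) at the point $x$. Under the running assumption that $w$ is locally Lipschitz and positive in $\Sigma$, Rademacher's theorem gives differentiability a.e., and concavity of $v$ on the convex open set $\Sigma$ is equivalent to the supporting hyperplane inequality holding at all points of differentiability of $v$ for every $z\in\Sigma$. Thus the chain of equivalences above remains valid in the almost-everywhere sense, and no further ingredient is needed beyond Euler's identity and the standard first-order characterization of concavity.
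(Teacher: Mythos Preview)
Your proof is correct and follows essentially the same route as the paper: introduce $v=w^{1/\alpha}$, use Euler's identity $\nabla v(x)\cdot x=v(x)$ for the $1$-homogeneous function $v$, and reduce the stated inequality to the first-order supporting hyperplane characterization of concavity. The only cosmetic difference is that the paper first isolates the case $\alpha=1$ before substituting $v=w^{1/\alpha}$, whereas you carry out the substitution and Euler's identity in one pass; your added remark on Rademacher and a.e.\ differentiability is a welcome clarification that the paper leaves implicit.
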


\begin{proof} Assume first $\alpha=1$.
A function $w$ is concave in $\Sigma$ if and only if for each $x,z\in\Sigma$ it holds
\begin{equation}\label{lemap1}w(x)+\nabla w(x)\cdot (z-x)\geq w(z).\end{equation}
Now, since $w$ is homogeneous of degree 1, we have
\begin{equation}\label{lemap2}\nabla w(x)\cdot x=w(x).\end{equation}
This can be seen by differentiating the equality $w(tx)=tw(x)$ and evaluating at $t=1$.
Hence, from \eqref{lemap1} and \eqref{lemap2} we deduce that an homogeneous function $w$ of degree 1 is concave if and only if
\[w(z)\leq \nabla w(x)\cdot z.\]
This proves the lemma for $\alpha=1$.

Assume now $\alpha\neq1$.
Define $v=w^{1/\alpha}$, and apply the result proved above to the function $v$, which is homogeneous of degree 1.
We obtain that $v$ is concave if and only if
\[v(z)\leq {\nabla v(x)\cdot z}.\]
Therefore, since $\nabla v(x)=\alpha^{-1}w(x)^{\frac{1}{\alpha}-1}\nabla w(x)$, we deduce that
$w^{1/\alpha}$ is concave if and only if
\[w(z)^{1/\alpha}\leq \frac{\nabla w(x)\cdot z}{\alpha w(x)^{1-\frac{1}{\alpha}}},\]
and the lemma follows.
\end{proof}

We give now the

\begin{proof}[Proof of Theorem \ref{th1} in the case $w\equiv0$ on $\partial\Sigma$ and $H=\|\cdot\|_2$]
For the sake of simplicity we assume here that $E= U\cap \Sigma$, where $U$ is some bounded
smooth domain in $\R^n$.
The case of general sets will be treated in Section \ref{sec4} when we prove Theorem \ref{th1} in its full generality.

Observe that since $E= U\cap \Sigma$ is piecewise Lipschitz, and $w\equiv0$ on $\partial\Sigma$, it holds
\begin{equation}
\label{perimeteronlyw}
P_{w}(E ; \Sigma) = \int_{\partial U\cap \Sigma} w(x) dx = \int_{\partial E} w(x)dx.
\end{equation}
Hence, using that $w\in C(\overline \Sigma)$ and \eqref{perimeteronlyw}, it is immediate to prove that for any $y\in \Sigma$ we have
\[  \lim_{\delta \downarrow 0} P_w(E + \delta y;\Sigma) = P_w(E;\Sigma) \quad \mbox{and} \quad \lim_{\delta \downarrow 0} w(E + \delta y) = w(E).\]
We have denoted $E + \delta y  = \{x + \delta y \, , \ x\in E\}$.
Note that $P_w(E + \delta y;\Sigma)$ could not converge to $P_w(E;\Sigma)$ as $\delta \downarrow 0$ if $w$ did not vanish on the boundary of the cone $\Sigma$.

By this approximation property and a subsequent regularization of $E + \delta y$ (a detailed argument can be found in the proof of Theorem \ref{th1} in next section), we see
that it  suffices to prove \eqref{mainresult} for smooth domains whose closure is contained in $\Sigma$.
Thus, from now on in the proof we denote by $\Omega$, instead of $E$, any smooth domain satisfying $\overline\Omega\subset \Sigma$.
We next show \eqref{mainresult} with $E$ replaced by $\Omega$.

At this stage, it is clear that by approximating $w|_{\overline{\Omega}}$ we can assume $w\in C^\infty(\overline\Omega)$.

Let $u$ be a solution of the linear Neumann problem
\begin{equation}
\left\{
\alignedat2
w^{-1}\textrm{div}(w\nabla u) &= b_\Omega
&\quad &\text{in } \Omega\quad \textrm{(with }\overline\Omega\subset\Sigma\textrm{)}\\
\frac{\partial u}{\partial\nu} &=1 &\quad &\text{on }\partial \Omega .
\endalignedat
\right.
\label{eqsem2}
\end{equation}
The Fredholm alternative ensures that there exists a solution of \eqref{eqsem2} (which is unique up to an additive constant) if and only if the constant $b_\Omega$ is given by
\begin{equation}\label{cttb}
b_\Omega=\frac{P_w(\Omega;\Sigma)}{w(\Omega)}.\end{equation}
Note also that since $w$ is positive and smooth in $\overline\Omega$, \eqref{eqsem2} is a uniformly elliptic problem with smooth coefficients.
Thus, $u\in C^{\infty}(\overline\Omega)$.
For these classical facts, see Example 2 in Section 10.5 of \cite{H}, or the end of Section 6.7 of \cite{GT}.

Consider now the lower contact set of $u$, $\Gamma_u$, defined by \eqref{lcset} as the set of points in $\Omega$ at which the
tangent hyperplane to the graph of $u$ lies below $u$ in all $\overline \Omega$.
Then, as in the proof of the Wulff theorem in Section \ref{sec2}, we touch by below the graph of $u$ with hyperplanes of fixed slope $p\in B_1$, and using the boundary condition in \eqref{eqsem2} we deduce that $B_1 \subset \nabla u (\Gamma_u)$.
From this, we obtain
\begin{equation}\label{fivepointstar}
B_1\cap \Sigma\subset \nabla u(\Gamma_u)\cap \Sigma
\end{equation}
and thus
\begin{equation}\label{ineqsec3}
\begin{split}
w(B_1\cap \Sigma) &\leq \int_{\nabla u (\Gamma_u)\cap\Sigma}w(p)dp \\
&\leq \int_{\Gamma_u\cap (\nabla u)^{-1}(\Sigma)} w(\nabla u(x))\det D^2u(x)\,dx\\
&\leq  \int_{\Gamma_u\cap (\nabla u)^{-1}(\Sigma)} w(\nabla u)\left(\frac{\Delta u}{n}\right)^ndx.
\end{split}
\end{equation}
We have applied the area formula to the smooth map $\nabla u : \Gamma_u \rightarrow \mathbb R^n$ and also the classical arithmetic-geometric means inequality ---all eigenvalues of $D^2u$ are nonnegative in $\Gamma_u$ by definition of this set.

Next we use that, when $\alpha>0$,
\[s^{\alpha}t^n\leq \left(\frac{\alpha s+nt}{\alpha+n}\right)^{\alpha+n}\ \ \textrm{for all }\ s>0\ \textrm{and}\ t>0,\]
which follows from the concavity of the logarithm function. Using also Lemma \ref{tic}, we find
\[\frac{w(\nabla u)}{w(x)}\left(\frac{\Delta u}{n}\right)^n\leq
\left(\frac{\alpha\left(\frac{w(\nabla u)}{w(x)}\right)^{1/\alpha}+\Delta u}{\alpha+n}\right)^{\alpha+n}\leq
\left(\frac{\frac{\nabla w(x)\cdot \nabla u}{w(x)}+\Delta u}{D}\right)^{D}.\]
Recall that $D=n+\alpha$.
Thus, using the equation in \eqref{eqsem2}, we obtain
\begin{equation}\label{36}
\frac{w(\nabla u)}{w(x)}\left(\frac{\Delta u}{n}\right)^n\leq \left(\frac{b_\Omega}{D}\right)^{D}\ \ {\rm in}\ \Gamma_u\cap (\nabla u)^{-1}(\Sigma).
\end{equation}
If $\alpha=0$ then $w\equiv1$, and \eqref{36} is trivial.

Therefore, since $\Gamma_u\subset \Omega$, combining \eqref{ineqsec3} and \eqref{36} we obtain
\begin{equation}\label{7}\begin{split}
w(B_1\cap\Sigma) &\leq \int_{\Gamma_u\cap (\nabla u)^{-1}(\Sigma)} \left(\frac{b_\Omega}{D}\right)^{D}w(x)dx=
\left(\frac{b_\Omega}{D}\right)^{D}w(\Gamma_u\cap (\nabla u)^{-1}(\Sigma))\\
&\leq \left(\frac{b_\Omega}{D}\right)^{D}w(\Omega)
= D^{-D}\frac{P_w(\Omega;\Sigma)^{D}}{w(\Omega)^{D-1}}.\end{split}\end{equation}
In the last equality we have used the value of the constant $b_\Omega$, given by \eqref{cttb}.

Finally, using that, by \eqref{formula-per=Dvol-for-Wulff}, we have $P_{w}(B_1;\Sigma) = D\,w(B_1\cap \Sigma)$, we obtain the desired inequality \eqref{mainresult}.

An alternative way to see that \eqref{7} is equivalent to \eqref{mainresult} is to analyze the previous argument when $\Omega=B_1\cap \Sigma$.
In this case $\overline\Omega\nsubseteq\Sigma$ and therefore, as explained in Section \ref{elementsproof}, we must solve problem \eqref{pb_neu_pes} instead of problem \eqref{eqsem2}.
When $\Omega=B_1\cap \Sigma$ the solution to problem \eqref{pb_neu_pes} is $u(x)=|x|^2/2$.
For this function $u$ we have $\Gamma_u=B_1 \cap \Sigma$ and $b_{B_1\cap \Sigma} = P_w(B_1;\Sigma)/w(B_1\cap \Sigma)$ ---as in  \eqref{cttb}.
Hence, for these concrete $\Omega$ and $u$ one verifies that all inclusions and inequalities in \eqref{fivepointstar}, \eqref{ineqsec3}, \eqref{36}, \eqref{7} are equalities, and thus \eqref{mainresult} follows.
\end{proof}

\bigskip
\section{Proof of Theorem \ref{th1}: the general case}
\label{sec4}

In this section we prove Theorem \ref{th1} in its full generality.
At the end of the section, we include the geometric argument of E. Milman that
provides an alternative proof of Theorem \ref{th1} in the case that the exponent $\alpha$ is an integer.

\begin{proof}[Proof of Theorem \ref{th1}]
Let
\[W_0:=W\cap \Sigma,\]
an open convex set, and nonempty by assumption.
Since $\lambda W_0\subset W_0$ for all $\lambda\in (0,1)$, we deduce that $0\in \overline W_0$.
Therefore, as commented in subsection \ref{subs1.1}, there is a unique gauge $H_0$ such that its Wulff shape is $W_0$.
In fact, $H_0$ is defined by expression \eqref{uniqueH} (with $W$ and $H$ replaced by $W_0$ and $H_0$).

Since $H_0\le H$ we have
\[P_{w,H_0}(E;\Sigma)\leq P_{w,H}(E;\Sigma)\ \ {\rm for\  each\  measurable\  set}\ E,\]
while, using \eqref{defperiint},
\[P_{w,H_0}(W_0;\Sigma)=P_{w,H}(W;\Sigma)\quad \textrm{and}\quad w(W_0)=w(W\cap\Sigma).\]
Thus, it suffices to prove that
\begin{equation}\label{ineqH0}
\frac{P_{w,H_0}(E;\Sigma)}{w(E)^{\frac{D-1}{D}} }\geq  \frac{P_{w,H_0}(W_0;\Sigma)}{w(W_0)^{\frac{D-1}{D}}}
\end{equation}
for all measurable sets $E\subset\Sigma$ with $w(E)<\infty$.

The definition of $H_0$ is motivated by the following reason. Note that $H_0$ vanishes on the directions
normal to the cone $\Sigma$. Thus, by considering $H_0$ instead of $H$, we will be able (by an approximation argument) to assume that $E$ is a smooth domain whose closure is contained in $\Sigma$.
This approximation cannot be done when $H$ does not vanish on the directions normal to the cone ---since
the relative perimeter does not count the part of the boundary lying on $\partial\Sigma$, while when $\overline E\subset\Sigma$ the whole perimeter is counted.

We split the proof of \eqref{ineqH0} in three cases.

{\em Case 1.} Assume that $E=\Omega$, where $\Omega$ is a smooth domain satisfying $\overline\Omega\subset\Sigma$.

At this stage, it is clear that by regularizing $w|_{\overline{\Omega}}$ and $H_0|_{S^{n-1}}$ we can assume $w\in C^\infty(\overline\Omega)$ and $H_0 \in C^\infty(S^{n-1})$.

Let $u$ be a solution to the Neumann problem
\begin{equation}
\left\{ \alignedat2 w^{-1}\textrm{div}(w\nabla u) &= b_\Omega
&\quad &\text{in } \Omega\\
\frac{\partial u}{\partial\nu} &=H_0(\nu) &\quad &\text{on }\partial \Omega ,
\endalignedat
\right. \label{eqsem3}
\end{equation}
where $b_\Omega\in\R$ is chosen so that the problem has a unique solution up to an additive constant, that is,
\begin{equation}\label{cttbsec4}
b_\Omega=\frac{P_{w,H_0}(\Omega;\Sigma)}{w(\Omega)}.
\end{equation}
Since $w$ is positive and smooth in $\overline\Omega$, 
and $H_0$, $\nu$, and $\Omega$ are smooth, we have that $u\in C^{\infty}(\overline\Omega)$.
See our comments following \eqref{eqsem2}-\eqref{cttb} for references of these classical facts.

Consider the lower contact set of $u$, defined by
\[\Gamma_u =\{ x \in \Omega \ : \ u(y) \ge u(x) + \nabla u (x) \cdot (y-x)\  \text{ for all } y \in \overline \Omega \}.\]
We claim that
\begin{equation} \label{claimW}
W_0 \subset \nabla u (\Gamma_u)\cap \Sigma.
\end{equation}

To prove \eqref{claimW}, we proceed as in the proof of Theorem \ref{wulffthm} in Section \ref{sec2}.
Take $p\in W_0$, that is, $p\in \mathbb R^n$ satisfying $p\cdot\nu < H_0(\nu)$ for each
$\nu\in S^{n-1}$.
Let $x\in \overline \Omega$ be a point such that
\[\min_{y\in \overline \Omega} \,\{ u(y) -p\cdot y \} = u(x)-p\cdot x.\]
If $x\in \partial \Omega$ then the exterior normal derivative of $u(y)-p\cdot y$ at $x$
would be nonpositive and, hence, $(\partial u /\partial\nu) (x) \le p\cdot \nu < H_0(p)$, a contradiction with \eqref{eqsem3}.
Thus, $x\in\Omega$, $p=\nabla u(x)$, and $x\in \Gamma_u$ ---see Section \ref{sec2} for more details. Hence, $W_0\subset \nabla u(\Gamma_u)$, and since $W_0\subset \Sigma$, claim \eqref{claimW} follows.

Therefore,
\begin{equation}
w(W_0) \leq \int_{\nabla u (\Gamma_u)\cap \Sigma}w(p) dp \leq \int_{\Gamma_u\cap (\nabla u)^{-1}(\Sigma)}w(\nabla u) \det D^2u \ dx . \label{ineq}
\end{equation}
We have applied the area formula to the smooth map $\nabla u : \Gamma_u \rightarrow \mathbb R^n$, and we have used that its Jacobian, $\det D^2u$, is
nonnegative in $\Gamma_u$ by definition of this set.

We proceed now as in Section \ref{sec3}.
Namely, we first use the following weighted version of the inequality between the arithmetic and the geometric means,
\begin{equation*}\label{am-gm}
a_0^{\alpha}a_1\cdots a_n\leq \left(\frac{\alpha a_0+a_1+\cdots+a_n}{\alpha+n}\right)^{\alpha+n},
\end{equation*}
applied to the numbers
$a_0=\left(\frac{w(\nabla u)}{w(x)}\right)^{1/\alpha}$ and $a_i=\lambda_i(x)$ for $i=1,...,n$, where $\lambda_1,...,\lambda_n$ are the eigenvalues of $D^2u$.
We obtain
\begin{equation}\label{y}
\frac{w(\nabla u)}{w(x)}\det D^2u\leq
\left(\frac{\alpha\left(\frac{w(\nabla u)}{w(x)}\right)^{1/\alpha}+\Delta u}{\alpha+n}\right)^{\alpha+n}\leq
\left(\frac{\frac{\nabla w(x)\cdot \nabla u}{w(x)}+\Delta u}{\alpha+n}\right)^{\alpha+n}.
\end{equation}
In the last inequality we have used Lemma \ref{tic}.
Now, the equation in \eqref{eqsem3} gives
\[\frac{\nabla w(x)\cdot \nabla u}{w(x)}+\Delta u=\frac{\div(w(x)\nabla u)}{w(x)} \equiv b_\Omega,\]
and thus using \eqref{cttbsec4} we find
\begin{equation}\label{ultima}
\begin{split}
\int_{\Gamma_u\cap (\nabla u)^{-1}(\Sigma)}w(\nabla u) \det D^2u  \ dx &
\leq \int_{\Gamma_u\cap (\nabla u)^{-1}(\Sigma)}w(x)\left(\frac{b_\Omega}{D}\right)^Ddx \\
&\leq \int_{\Gamma_u}w(x)\left(\frac{b_\Omega}{D}\right)^Ddx =\left( \frac{P_{w,H_0}(\Omega;\Sigma)} {D\,w(\Omega)} \right)^Dw(\Gamma_u).
\end{split}
\end{equation}

Therefore, from \eqref{ineq} and \eqref{ultima} we deduce
\begin{equation}\label{hola}
w(W_0)\leq \left( \frac{P_{w,H_0}(\Omega;\Sigma)} {D\,w(\Omega)} \right)^D w(\Gamma_u)
\leq \left( \frac{P_{w,H_0}(\Omega;\Sigma)} {D\,w( \Omega )} \right)^D w(\Omega).
\end{equation}

Finally, using that, by \eqref{formula-per=Dvol-for-Wulff}, we have $P_{w,H_0}(W;\Sigma) = D\,w(W_0)$, we deduce \eqref{ineqH0}.

An alternative way to see that \eqref{hola} is equivalent to \eqref{ineqH0} is to analyze the previous argument when $\Omega=W_0=W\cap \Sigma$.
In this case $\overline\Omega \nsubseteq\Sigma$ and therefore, as explained in Section \ref{elementsproof}, we must solve problem
\begin{equation} \label{finalpb}
\left\{
\alignedat2
w^{-1}{\rm div\,} (w\nabla u) &= b_\Omega&
\quad &\text{in } \Omega
\\ \vspace{1mm}
\frac{\partial u}{\partial\nu} &=H_0(\nu)&\quad &\text{on }\partial \Omega \cap \Sigma
\\ \vspace{1mm}
\frac{\partial u}{\partial\nu} &= 0&\quad &\text{on }\partial \Omega \cap \partial\Sigma
\endalignedat\right.
\end{equation}
instead of problem \eqref{eqsem3}.
When $\Omega=W_0$, the solution to problem \eqref{finalpb} is
\[u(x)=|x|^2/2.\]
For this function $u$ we have $\Gamma_u=W_0$ and $b_{W_0} = P_{w,H_0}(W_0;\Sigma)/w(W_0)$ ---as in \eqref{cttbsec4}.
Hence, for these concrete $\Omega$ and $u$ one verifies that all inclusions and inequalities in \eqref{claimW}, \eqref{ineq}, \eqref{y}, \eqref{ultima}, and \eqref{hola} are equalities, and thus \eqref{ineqH0} follows.

{\em Case 2.} Assume now that  $E= U\cap \Sigma$, where $U$ is a bounded smooth open set in $\R^n$. 
Even that both $U$ and $\Sigma$ are Lipschitz sets, their intersection might not be Lipschitz (for instance if $\partial U$ and $\partial\Sigma$ meet tangentially at a point). As a consequence, approximating $U\cap\Sigma$ by smooth sets converging in perimeter is a more subtle issue. However,
we claim that there exists a sequence $\{\Omega_k\}_{k\ge 1}$ of smooth bounded domains satisfying 
\begin{equation}\label{regularization}
 \overline{\Omega_k}\subset \Sigma\qquad \mbox{and} \qquad\lim_{k\rightarrow\infty} \frac{P_{w,H_0}(\Omega_k;\Sigma)}{w(\Omega_k)^{\frac{D-1}{D}} }\le \frac{P_{w,H_0}(E;\Sigma)}{w(E)^{\frac{D-1}{D}}}.
\end{equation}
Case 2 follows immediately using this claim and what we have proved in Case 1. We now proceed to prove the claim.

It is no restriction to assume that $e_n$, the $n$-th vector of the standard basis, 
belongs to the cone $\Sigma$. Then, $\partial \Sigma$ is a convex graph (and therefore, 
Lipschitz in every compact set) over the variables $x_1,\dots,x_{n-1}$. That is,
\begin{equation}\label{Sigmasuperlevel}
\Sigma=\{x_n > g(x_1,\dots, x_{n-1})\}
\end{equation}
for some convex function $g:\R^{n-1}\rightarrow \R$.

First we construct a sequence
\begin{equation}\label{Fksuperlevel}
F_k =\{x_n > g_k(x_1,\dots, x_{n-1})\}, \quad {k\ge1}
\end{equation}
of convex smooth sets whose boundary is a graph $g_k:\R^{n-1}\rightarrow \R$ over the first 
$n-1$ variables and satisfying:
\begin{enumerate}
\item[(i)]  $g_1>g_2> g_3 >\dots\ $ in $\overline B$, where $B$ is a large ball $B\subset \R^{n-1}$ containing 
the projection of $\overline U$.
\item[(ii)] $g_{k} \rightarrow g$  uniformly in $\overline B$.
\item[(iii)] $\nabla g_k \rightarrow  \nabla g$ almost everywhere in $\overline B$ and 
$|\nabla g_k|$ is bounded independently of~$k$.
\item[(iv)] The smooth manifolds $\partial F_k= \{x_n =  g_k(x_1,\dots, x_{n-1})\}$ and $\partial U$ 
intersect transversally.
\end{enumerate}
To construct the sequence $g_k$, we consider the convolution of $g$ with a standard mollifier
\[\tilde g_k = g \ast k^{n-1}\eta(k x)+ \frac{C}{k}\]
with $C$ is a large constant (depending on $\|\nabla g\|_{L^\infty(\R^{n-1})}$) to guarantee $\tilde g_k>g$ in $\overline B$. It follows that a subsequence of $\tilde g_k$ will satisfy (i)-(iii).
Next, by a version of Sard's Theorem \cite[Section 2.3]{topology} almost every small translation of the 
smooth manifold $\{x_n =  \tilde g_k(x_1,\dots, x_{n-1})\}$ will intersect $\partial U$ transversally. 
Thus, the sequence
\[g_k(x_1,\dots, x_{n-1}) = \tilde g_k(x_1- y_1^k, \dots, x_{n-1}-y_{n-1}^k) + y_n^k\]
will satisfy (i)-(iv) if $y^k\in \R^{n}$ are chosen with $|y^k|$ sufficiently small depending on~$k$
---in particular to preserve (i).

Let us show now that $P_{w,H_0}(U \cap F_k ; \Sigma)$ converges to $P_{w,H_0}(E ; \Sigma)$ as 
$k\uparrow\infty$. Note that (i) yields $F_{k}\subset F_{k+1}$ for all $k\ge 1$. 
This monotonicity will be useful to prove the convergence of perimeters, that we do next.

Indeed, since we considered the gauge $H_0$ instead of $H$, we have the following property
\begin{equation}
\label{perimeter}
P_{w,H_0}(E ; \Sigma) = \int_{\partial U\cap \Sigma} H_0(\nu(x))w(x) dx = \int_{\partial E} H_0(\nu(x))w(x)dx.
\end{equation}
This is because 
$\partial E = \partial(U\cap E)\subset (\partial U \cap\Sigma)\cup (\overline U \cap \partial \Sigma)$ and
\begin{equation}
\label{H0}
H_0(\nu(x))=0\quad \mbox{ for almost all }\ x \in \partial\Sigma.
\end{equation}
Now, since $\partial (U\cap F_k) \subset  (\partial U\cap F_k) \cup (\overline U\cap\partial F_k)$ we have
\[0 \le P_{w,H_0}(U \cap F_k ; \Sigma) -  \int_{\partial U\cap F_k} H_0(\nu(x))w(x) dx \le  \int_{\overline U\cap\partial F_k} H_0(\nu_{F_k}(x))w(x) dx.
\]
On one hand, using dominated convergence, \eqref{Sigmasuperlevel}, \eqref{Fksuperlevel}, (ii)-(iii), and \eqref{H0},  we readily prove that
\[\int_{\overline U\cap\partial F_k} H_0(\nu_{F_k}(x))w(x) dx \rightarrow 0.\]
On the other hand,  by (i) and (ii), $F_k\cap (B\times\R)$ is an increasing sequence exhausting  $\Sigma\cap (B\times\R)$. Hence, by monotone convergence
\[\int_{\partial U\cap F_k} H_0(\nu(x))w(x) dx \rightarrow \int_{\partial U\cap \Sigma} H_0(\nu(x))w(x) dx = P_{w,H_0}(E ; \Sigma).\]
Therefore, the sets $U\cap F_k$ approximate $U\cap \Sigma$ in $L^1$ and in the $(w,\negthinspace H_0)$-perimeter. Moreover, by (iv), $U\cap F_k$ are Lipschitz open sets.

Finally, to obtain the sequence of smooth domains $\Omega_k$ in \eqref{regularization}, 
we use a partition of unity and local regularization of the Lipschitz sets $U\cap F_k$ 
to guarantee the convergence of the $(w,\negthinspace H_0)$-perimeters.
In case that the regularized sets had more than one connected component, 
we may always choose the one having better isoperimetric quotient.

{\em Case 3.}
Assume that $E$ is any measurable set with $w(E)<\infty$ and $P_{w,H_0}(E;\Sigma)\leq P_{w,H}(E;\Sigma)< \infty$.
As a consequence of Theorem 5.1 in \cite{BBF}, $C^\infty_c(\R^n)$ is dense in the space $BV_{\mu,H_0}$ of functions of bounded variation with respect to the measure $\mu=w\chi_{\Sigma}$ and the gauge $H_0$.
Note that our definition of perimeter $P_{w,H_0}( E;\Sigma)$ coincides with the $(\mu,H_0)$-total variation of the characteristic function $\chi_E$, that is, $|D_{\hspace{-0.7pt}\mu}\hspace{1pt}\chi_E |_{H_0}$ in notation of  \cite{BBF}.
Hence, by the coarea formula in Theorem 4.1 in \cite{BBF} and the argument in Section 6.1.3 in \cite{Mazja}, we find that for each measurable set $E\subset\Sigma$ with finite measure there exists a sequence of bounded smooth sets $\{U_k\}$ satisfying
\[\lim_{k\rightarrow\infty} w(U_k\cap \Sigma)= w(E)\quad  {\rm and}\quad \lim_{k\rightarrow\infty} P_{w,H_0}(U_k; \Sigma)= P_{w,H_0}(E;\Sigma).\]
Then we are back to Case 2 above, and hence the proof is finished.
\end{proof}

After the announcement of our result and proof in \cite{CRS-CRAS},
Emanuel Milman showed us a nice geometric construction that yields the weighted inequality in Theorem \ref{th1} in the case that $\alpha$ is a nonnegative \emph{integer}.
We next sketch this construction.

\begin{rem}[Emanuel Milman's construction]\label{milmans}
When $\alpha$ is a nonnegative integer the weighted isoperimetric inequality of Theorem \ref{th1} (when $H=\|\cdot\|_2$) can be proved as a limit case of the Lions-Pacella inequality in convex cones of $\R^{n+\alpha}$.
Indeed, let $w^{1/\alpha}>0$ be a concave function, homogeneous of degree 1, in an open convex cone $\Sigma\subset\R^n$.
For each $\varepsilon>0$, consider the cone
\[
\mathcal C_{\varepsilon} = \left\{ (x,y)\in \R^n\times \R^\alpha \,: \, x\in \Sigma,\ |y|<\varepsilon w(x)^{1/\alpha}\right\}.
\]
From the convexity of $\Sigma$ and the concavity of $w^{1/\alpha}$ we have  that $\mathcal C_\varepsilon$ is a convex cone.
Hence, by Theorem \ref{isopcon} we have
\begin{equation}\label{desig-a-dalt}
\frac{P(\tilde E;\mathcal C_\varepsilon)}{|\tilde E\cap \mathcal C_\varepsilon|^{\frac{n+\alpha-1}{n+\alpha}}} \ge \frac{P(\mathcal B_1;\mathcal C_\varepsilon)}{|\mathcal B_1\cap \mathcal C_\varepsilon|^{\frac{n+\alpha-1}{n+\alpha}}}\quad \textrm{for all}\ \tilde E\ \textrm{with}\ |\tilde E\cap \mathcal C_\varepsilon|<\infty,
\end{equation}
where $\mathcal B_1$ is the unit ball of $\R^{n+\alpha}$.
Now, given a Lipschitz set $E\subset \R^n$, consider the cylinder $\tilde E= E\times \R^{\alpha}$
one finds
\[|\tilde E\cap \mathcal C_{\varepsilon}| = \int_{E\cap \Sigma} dx \int_{\{|y|<\varepsilon w(x)^{1/\alpha}\}} dy = \omega_\alpha\varepsilon^\alpha \int_{E\cap \Sigma} w(x) dx = \omega_\alpha\varepsilon^\alpha w(E\cap \Sigma)\]
and
\[P(\tilde E;\mathcal C_{\varepsilon}) = \int_{\partial E\cap \Sigma} dS(x) \int_{\{|y|<\varepsilon w(x)^{1/\alpha}\}} dy = \omega_\alpha \varepsilon^\alpha \int_{\partial E\cap \Sigma} w(x) dS = \omega_\alpha\varepsilon^\alpha P_w(E; \Sigma).\]

On the other hand, one easily sees that, as $\varepsilon\downarrow 0$,
\[\frac{P(\mathcal B_1;\mathcal C_\varepsilon)}{|\mathcal B_1\cap \mathcal C_\varepsilon|^{\frac{n+\alpha-1}{n+\alpha}}} =  (\omega_\alpha \varepsilon^\alpha)^{\frac{1}{n+\alpha}} \left(\frac{P_w(B_1; \Sigma)}{w(B_1\cap \Sigma)^{\frac{n+\alpha-1}{n+\alpha}}} + o(1)\right),\]
where $B_1$ is the unit ball of $\R^n$.
Hence, letting $\varepsilon\downarrow 0$ in \eqref{desig-a-dalt} one obtains
\[\frac{P_w(E; \Sigma)}{w(E\cap \Sigma)^{\frac{n+\alpha-1}{n+\alpha}}}\ge \frac{P_w(B_1; \Sigma)}{w(B_1\cap \Sigma)^{\frac{n+\alpha-1}{n+\alpha}}},\]
which is the inequality of Theorem \ref{th1} in the case that $H = \|\cdot\|_2$ and $\alpha$ is an integer.
\end{rem}

\end{document}